\algnewcommand\algorithmicinput{\textbf{Input:}}
\algnewcommand\Input{\item[\algorithmicinput]}
\algnewcommand\algorithmicoutput{\textbf{Output:}}
\algnewcommand\Output{\item[\algorithmicoutput]}
\newtheorem{theorem}{Theorem}[section]
\newtheorem{lemma}[theorem]{Lemma}
\newtheorem{corollary}[theorem]{Corollary}
\theoremstyle{definition}
\newtheorem*{definition*}{Definition}
\theoremstyle{remark}
\newtheorem{remark}[theorem]{Remark}
\theoremstyle{definition}
\newcommand{\R}{{\mathbb{R}}}
\newcommand{\Lip}{\lambda}
\newcommand{\frob}{\mathrm{F}}
\DeclareMathOperator{\tr}{trace}
\DeclareMathOperator{\rank}{rank}
\DeclareMathOperator{\grad}{grad}
\DeclareMathOperator{\qr}{qr}
\DeclareMathOperator*{\argmin}{argmin}
\newcommand{\abs}[1]{{\left\lvert #1 \right\rvert}}
\numberwithin{equation}{section}
\title{Gauss--Southwell type descent methods for\\low-rank matrix optimization}
\author{Guillaume Olikier\thanks{Université Côte d’Azur and Inria, Epione Project Team, 2004 route des Lucioles - BP 93, 06902 Sophia Antipolis Cedex, France} \and Andr\'e Uschmajew\thanks{Institute of Mathematics \& Centre for Advanced Analytics and Predictive Sciences, University of Augsburg, 86159 Augsburg, Germany} \and Bart Vandereycken\thanks{Section of Mathematics, University of Geneva, 1211 Geneva, Switzerland}}
\date{}
\begin{document}

\maketitle

\begin{abstract}
We consider gradient-related methods for low-rank matrix optimization with a smooth cost function. The methods operate on single factors of the low-rank factorization and share aspects of both alternating and Riemannian optimization. Two possible choices for the search directions based on Gauss--Southwell type selection rules are compared: one using the gradient of a factorized non-convex formulation, the other using the Riemannian gradient. While both methods provide gradient convergence guarantees that are similar to the unconstrained case, numerical experiments on a quadratic cost function indicate that the version based on the Riemannian gradient is significantly more robust with respect to small singular values and the condition number of the cost function. As a side result of our approach, we also obtain new convergence results for the alternating least squares method.
\end{abstract}

\section{Introduction}

We consider a differentiable function $f \colon \R^{m \times n} \to \R$ and the optimization problem
\begin{equation}\label{eq: optimization problem}
\min_{X \in \R^{m \times n}} f(X) \quad \text{s.t.} \quad \rank(X) \le k, 
\end{equation}
where $m$, $n$, and $k$ are positive integers such that $k < \min\{m,n\}$. The constraint set
\begin{equation}\label{eq: set M leq k}
\mathcal{M}_{\le k} \coloneqq \{ X \in \R^{m \times n} \colon \rank(X) \le k \}
\end{equation}
is non-convex, which makes it difficult to find the global minimum even if the function~$f$ would be convex. Nevertheless, very efficient first-order algorithms for finding critical points can be implemented based on alternating minimization.

At the core, these algorithms are derived from the fact that the rank constraint in~\eqref{eq: optimization problem} can be eliminated by using the bilinear factorization
\[
X = L R^\top, \quad L \in \R^{m \times k}, \ R \in \R^{n \times k}
\]
of a matrix of rank at most $k$. Problem~\eqref{eq: optimization problem} is hence equivalent to the unconstrained minimization problem
\begin{equation}\label{eq: opt problem factored form}
\min_{\substack{L \in \R^{m \times k} \\ R \in \R^{n \times k}}} g(L,R) \qquad \text{with} \qquad g(L,R) \coloneqq f(L R^\top).
\end{equation}
In the terminology of the recent work~\cite{LevinKileelBoumal2024}, the map $\R^{m \times k} \times \R^{n \times k} \to \R^{m \times n} : (L, R) \mapsto LR^\top$ is a smooth lift of the determinantal variety $\mathcal{M}_{\le k}$ \cite[Lecture~9]{Harris1992} and $g$ is the lifted cost function. The class of alternating optimization methods then consists of algorithms of the following general form:

\begin{enumerate}[(1)]\itemsep2pt
\item
Given $X_\ell = L_\ell^{} R_\ell^\top$ in factorized form, choose whether to update the $L$- or $R$-factor.
\item
Depending on the choice in (1) find an update $\hat L_{\ell+1}$ or $\hat R_{\ell+1}$ and set $X_{\ell+1} \coloneqq \hat L_{\ell+1}^{} R_\ell^\top$ or $X_{\ell+1} \coloneqq L_\ell^{} \hat R_{\ell+1}^\top$, respectively.
\item
Change the representation: choose $L_{\ell+1}$ and $R_{\ell+1}$ such that $X_{\ell+1} = L_{\ell+1}^{} R_{\ell+1}^\top$.
\end{enumerate}

In this paper we consider two different methods of this type (Algorithms~\ref{algo: balanced GD} and~\ref{algo: Subspace GD} below), where the search directions in step (2) are gradient-related in a suitable sense and step (1) is based on a Gauss--Southwell type selection rule.

The most obvious search directions are the negative partial gradients $- \nabla_L \, g(L_\ell, R_\ell)$ and $- \nabla_R \, g(L_\ell, R_\ell)$. Unfortunately, this straightforward choice suffers from the non-uniqueness of the representation $X = LR^\top$ in a twofold way. First, the resulting search direction in the image space $\R^{m \times n}$ depends on the particular chosen representation $X_\ell^{} = L_\ell^{} R_\ell^\top$ (see the formulas~\eqref{eq: partial gradients of g} below), which is conceptually unsatisfying. Second, the Lipschitz constant for the partial gradients depends on the chosen representation as well, which in theory may pose non-uniform step-size restrictions for ensuring convergence. And indeed, it can be observed in practice that this has a large influence on the convergence speed. Both problems, however, can be overcome by using a so-called balanced factorization in step (3) of the above pseudocode for removing the non-uniqueness (see section~\ref{sec: Balanced factorized version}), as has been proposed in several works; see, for example, \cite{Zhu2018,Zhu2021,Li2019,Li2020} where balancing is indirectly promoted via regularization. While such a regularization may have other potential benefits, we omit it here in order to not change the cost function and instead apply explicit balancing after every step, which comes at negligible~additional~cost.

Regarding the choice of the factor to be updated in step (1), the cyclic Gauss--Seidel type update rule is the most common choice and optimizes $L$ and $R$ alternatingly. For example, variants of the alternating gradient descent method (using the partial gradients of $g$) have been proposed and analyzed for low-rank matrix completion in~\cite{SunLuo2016} and~\cite{Tanner2016}. However, instead of a cyclic rule, in this work we advocate for using Gauss--Southwell type update rules in which the block variable with the largest partial gradient is selected for the next update. This selection rule originates from classic iterative solvers for linear systems~\cite{Saad2020} and has also been well known for nonlinear optimization ever since; see, e.g.,~\cite{LuoTseng1992,Nesterov2012,Nutini2015} for convex problems and~\cite{Nutini2022} for a general overview. The general advantage of the Gauss--Southwell rule is that the norm of the selected block gradient is at least a fraction of the norm of the full gradient of $g$, which allows to deduce convergence rate estimates for obtaining $\varepsilon$-critical points by simply applying the descent property of gradient descent (see~\eqref{eq: gradient descent decrease}) for the partial gradient steps. However, in the context of low-rank optimization, one known issue, also observed for other update rules, then still remains: the convergence rate in terms of function values or distances to solutions severely deteriorates for ill-conditioned solutions (accumulation points with a large ratio of the largest and $k$th largest singular value), one intrinsic reason being that the gradient of the function $g$ is not a robust measure of criticality on the manifold of rank-$k$ matrices.

As an alternative to the partial gradients of $g$, we will therefore also consider a different way of obtaining search directions, which is motivated from a Riemannian optimization perspective. The bilinear nature of the parametrization of the set $\mathcal{M}_{\le k}$ entails that at the smooth points~$X$ of that set (the matrices of rank exactly $k$), the \emph{tangent space} is decomposed into two overlapping linear spaces $T_1(X) + T_2(X)$, each of which contains $X$ and is itself entirely contained in $\mathcal{M}_{\le k}$. It is therefore natural to use the orthogonal projections of the negative gradient $- \nabla f(X_\ell)$ onto $T_1(X_\ell)$ or $T_2(X_\ell)$ as search directions. These search directions are \emph{independent} of the chosen factorization $X_\ell = L_\ell^{} R_\ell^\top$ and actually correspond to partial gradients of $g$ when the factor not to be updated is chosen to have orthonormal columns, which can be achieved by changing the representation using QR factorization after every update. Such a variant of alternating optimization including QR factorization is usually implemented anyway for reasons of numerical stability. As an alternative interpretation, taking orthogonal projections of $- \nabla f(X)$ onto $T_1(X)$ and $T_2(X)$ as search directions turns out to be mathematically equivalent to taking so-called~\emph{scaled} (or \emph{preconditioned}) variants $- \nabla_L \, g(L, R) \cdot (R^\top R)^{-1}$ and $- \nabla_R \, g(L, R) \cdot (L^\top L)^{-1}$ of the partial gradients of $g$ for updating $L$ and $R$; see~\eqref{eq:update scaledGD} below. Such scaled gradients have been considered for example in~\cite{Mishra2012,Tanner2016,TongMaChi2021}. Notably, in~\cite[\S 4]{Mishra2012} they are derived from choosing a suitable quotient geometry for the manifold of fixed-rank matrices. Corresponding optimization methods based on cyclic or simultaneous update rules are called~\emph{scaled alternating steepest descent} (ScaledASD) in~\cite{Tanner2016} and~\emph{scaled gradient descent} (ScaledGD) in~\cite{TongMaChi2021}. In these works, the accelerating effect and/or robustness with respect to small singular values is clearly confirmed; see also~\cite[Remark~7]{Luo2024}. While~\cite[Theorem~4.1]{Tanner2016} proves stationarity of accumulation points of ScaledASD for a non-convex formulation of matrix completion under minimal assumptions, but without rate,~\cite[Theorem~11]{TongMaChi2021} provides local linear convergence rates for ScaledGD close to global minima of general cost functions with rank-restricted convexity properties in noiseless scenarios, including matrix sensing under restricted isometry assumptions or matrix completion under incoherence assumptions.

The observation that the scaled partial gradients of $g(L,R)$ correspond to orthogonal projections of $\nabla f(X)$ onto linear subspaces within $\mathcal M_{\le k}$, although not difficult, could be new and gives these methods an interpretation as Riemannian optimization methods on the embedded rank-$k$ manifold. In particular, as we show in the present paper, when combined with a Gauss--Southwell rule, this viewpoint allows to analyze the convergence of the corresponding method (presented in Algorithm~\ref{algo: Subspace GD}) as a gradient-related Riemannian descent method. In fact, the search directions then satisfy an angle condition with the negative~\emph{Riemannian gradient} (here: the projection of $-\nabla f(X)$ onto the full tangent space) and hence the convergence rate to an $\varepsilon$-Riemannian critical point can be estimated in a quite simple way based on standard results. This is carried out in section~\ref{sec: embedded version}, the main results being a global $O(1/\sqrt{\ell})$ convergence rate for the Riemannian gradient. In terms of the above references, Algorithm~\ref{algo: Subspace GD} may be equivalently interpreted as a Gauss--Southwell variant of scaled gradient descent, and while our convergence result seems to be new in this general form, we do not claim (and did not test) that a Gauss--Southwell rule for scaled gradients leads to better performance than other variants. Instead, our focus in this work is on the simple convergence analysis that it admits, despite being a Riemannian method. We note that a method of this type has already been proposed as a retraction-free Riemannian method in~\cite[Algorithm~4]{Schneider2015}.

Having two possible choices of gradient-related search directions, it is then natural to ask which of the two is better. In terms of the rates, it turns out that the Riemannian version needs the same (estimated) number of steps for making the Riemannian gradient of $f$ smaller than $\varepsilon$ as the balanced factorized version needs for making the gradient of $g$ smaller than $\varepsilon$. However, as we show in section~\ref{sec: comparison}, the first condition implies the second (up to a small constant) but not the other way around. In this sense, we will confirm as one of the main messages of the paper that the Riemannian search directions are much more powerful than expected. Our numerical experiments in section~\ref{sec: numerical experiments} indeed confirm that it converges faster and exhibits stability with respect to small singular values of $X$, which is completely in line with aforementioned results on scaled gradient descent in~\cite{TongMaChi2021}.

One important thing to note about both of our methods is that limiting points can fail to be critical points of the original problem~\eqref{eq: optimization problem}, despite the Riemannian gradients converging to zero during the iteration. Although it is rarely observed in practice, such a behavior, termed an ``apocalypse'' in~\cite{LevinKileelBoumal2023}, can occur when a rank drop happens at the limit; cf.~Remark~\ref{rem: critical points} below. For the Riemannian version of our algorithm, it has been recently proposed how to exclude such undesirable points as accumulation points of the algorithms by disregarding the $k$th singular value if it is below a prescribed threshold~\cite[Algorithm~3]{OlikierAbsil2023}, while preserving the convergence rates for the Riemannian gradient as stated in our Corollary~\ref{cor: subspace}. In our work, we will largely ignore this subtlety (and assume full rank $k$ at accumulation points) in order to keep things conceptually simple.

There are two other sets of results in this work. In section~\ref{sec: Linear convergence rate}, we address the fact that the algebraic (i.e., sublinear) convergence rates for gradients obtained from a standard analysis based on Lipschitz continuity of the gradients are usually overly pessimistic. Indeed, in practice one observes a linear convergence rate. For the Riemannian version of our Gauss--Southwell approach we provide such a result in section~\ref{sec: Linear convergence rate} provided one of the accumulation points has a positive-definite Riemannian Hessian. This is a standard assumption for obtaining linear rates and for instance also implied by the assumptions of~\cite[Theorem~11]{TongMaChi2021} on local linear convergence of ScaledGD. Our proof is similar to the linear convergence proof for the Riemannian gradient method in~\cite[Theorem~4.5.6]{Absil2008}. In section~\ref{sec: ALS} we show how our arguments can be adapted for obtaining seemingly novel convergence estimates for the popular alternating least squares (ALS) algorithm. In fact, for two blocks as in low-rank matrix optimization, the cyclic rule of ALS coincides with a Gauss--Southwell  rule, since one of the partial gradients is always zero by construction. Our global $O(1/\sqrt{\ell})$ rate for the Riemannian gradients in ALS as presented in Corollary~\ref{cor: ALS} is insofar remarkable, as most of the available results for alternating minimization only deal with convex problems (which allows for better rates;~see, e.g.,~\cite{Hong2017} for an overview). Our arguments, however, share some similarity with the results in~\cite[Section~3.1]{Beck2015} on general non-convex functions, but are more specific to the low-rank optimization scenario by considering scaled and Riemannian gradients.

The remainder of this work is structured as follows. In section~\ref{sec: gradient descent methods} we review basic facts on the gradient method for $\Lip$-smooth functions on affine subspaces, which will serve as the main tool. Section~\ref{sec: main section} is the core section of this work. Section~\ref{sec: Balanced factorized version} develops the algorithm based on partial gradients of the function $g$ and balancing (Algorithm~\ref{algo: balanced GD}). The main results on gradient convergence for various step-size rules are Theorem~\ref{thm: balanced} and Corollary~\ref{cor: balanced}. In section~\ref{sec: embedded version}, the algorithm based on the decomposition of the Riemannian gradient (Algorithm~\ref{algo: Subspace GD}) is developed in an analogous fashion. Main results are Theorem~\ref{thm: subspace} and Corollary~\ref{cor: subspace}. In section~\ref{sec: comparison} both approaches are compared; corresponding numerical experiments are presented in section~\ref{sec: numerical experiments}. The results regarding local linear convergence and ALS are given in sections~\ref{sec: Linear convergence rate} and~\ref{sec: ALS}, respectively.

\section{Recap of gradient descent methods}\label{sec: gradient descent methods}

Let us first briefly recall some main results on the convergence of the gradient method for $\Lip$-smooth functions. While the results are very well known (see, e.g., \cite{Nesterov2018}), we present them here for the case of a differentiable function $\phi \colon T \to \R$ defined on an affine subspace $T$ of a Euclidean vector space. In our applications later, $\phi$ will be the restriction of the cost functions $f$ and $g$, respectively, to certain subspaces. Given $x \in T$, one step of the gradient method in the space $T$ takes the form
\begin{equation}\label{eq: gradient descent}
x_+ \coloneqq x - h \nabla \phi(x).
\end{equation}
Here $h > 0$ is a step size that can be chosen either fixed or adaptively at each iteration. The ultimate goal of the iteration is to find a point $x^* \in T$ satisfying $\nabla \phi(x^*) = 0$.

The easiest analysis of the gradient descent step~\eqref{eq: gradient descent} assumes that the function $\phi$ is \emph{$\Lip$-smooth}, which means that $\nabla \phi$ is Lipschitz continuous with constant $\Lip \ge 0$:
\[
\| \nabla \phi(x) - \nabla \phi(y) \| \le \Lip \| x - y \| \text{ for all } x, y \in T.
\]
The key estimate implied by this property is the following. If $\phi$ is $\Lip$-smooth on the line segment connecting $x$ and $y$, then
\begin{equation}\label{eq: key estimate}
\phi(x) - \phi(y) \ge \langle \nabla \phi(x), x - y \rangle - \frac{\Lip}{2} \| x - y \|^2.
\end{equation}
The proof is straightforward from a Taylor expansion with integral remainder; see, e.g.,~\cite[Lemma~1.2.3]{Nesterov2018}.

If applicable,~\eqref{eq: key estimate} yields the following estimate for one step of the gradient method:
\begin{equation}\label{eq: gradient descent decrease}
\phi(x) - \phi(x_+) \ge \left( h - \frac{\Lip}{2} h^2 \right) \| \nabla \phi(x) \|^2.
\end{equation}
The factor in front of $\| \nabla \phi(x) \|^2$ is positive for step sizes $h = \frac{2\alpha}{\Lip}$ with $\alpha \in (0,1)$, leading to
\begin{equation*}\label{eq: quasi optimal decrease}
\phi(x) - \phi(x_+) \ge \frac{2}{\Lip} \alpha (1 - \alpha) \| \nabla \phi(x) \|^2.
\end{equation*}
The ``optimal'' step size that maximizes the right-hand side in this estimate is
\[
h^* \coloneqq \frac{1}{\Lip}
\]
and yields
\begin{equation}\label{eq: optimal decrease}
\phi(x) - \phi(x_+) \ge \frac{1}{2\Lip} \| \nabla \phi(x) \|^2.
\end{equation}

The ``optimal'' step size $h^*$ has the disadvantage that it requires knowing a conservative estimate for $\Lip$, which may lead to $h^*$ being unnecessarily small. There are several adaptive strategies for addressing this. For instance, when $\phi$ is a convex quadratic function, it is possible to find a so-called ``exact'' step size that minimizes the function $h \mapsto \phi(x - h \nabla \phi(x))$. Using this step size we obviously obtain the same estimate as~\eqref{eq: optimal decrease}, given that the $\Lip$-smoothness condition is satisfied.

A more general strategy is line search. Here the Armijo step-size rule is among the most well known: given $\gamma \in (0,1)$ find a step size $h > 0$ satisfying the inequality
\begin{equation}\label{eq: Armijo condition}
\phi(x) - \phi(x_+) \ge h \gamma \| \nabla \phi(x) \|^2.
\end{equation}
To avoid an unnecessarily small $h$, one typically uses a backtracking procedure by checking step sizes of the form
\[
h = \bar{h}, \bar{h} \beta, \bar{h} \beta^2, \dots
\]
in decreasing order with some fixed $\bar{h} \in (0,\infty)$ and $\beta \in (0,1)$. By~\eqref{eq: gradient descent decrease}, the Armijo condition~\eqref{eq: Armijo condition} will be satisfied once
\(
h - \frac{\Lip}{2} h^2 \ge h \gamma,
\)
or equivalently,
\begin{equation}\label{eq: stop backtrack}
h \le \frac{2}{\Lip}(1 - \gamma).
\end{equation}
Therefore, the backtracking procedure will terminate with $h = \bar{h} \beta^\ell$ for some
\[
\ell \le \max\left\{0, \left\lceil \ln \left( \frac{2}{\bar{h}\Lip}(1 - \gamma) \right) / \ln \beta \right\rceil \right\}.
\]
To avoid the rounding above, we note that $h \ge \min\{\frac{2}{\Lip} \beta (1 - \gamma), \bar{h}\}$ since we stop backtracking as soon as~\eqref{eq: stop backtrack} is satisfied. The resulting estimate~\eqref{eq: Armijo condition} is
\[
\phi(x) - \phi(x_+) \ge \min\left\{\frac{2}{\Lip} \beta \gamma (1-\gamma), \bar{h}\gamma \right\} \| \nabla \phi(x) \|^2.
\]

For later reference we emphasize that in this point-wise estimate for the Armijo step size it is in principle possible to replace a global smoothness constant $\Lip$ with a restricted smoothness constant on the line segment connecting $x$ and $x - \bar{h} \nabla f(x)$, that is, with a constant $\Lip_x \le \Lip$ satisfying
\begin{equation}\label{eq: restricted smoothness}
\| \nabla \phi(x) - \nabla \phi(x - t \bar{h} \nabla \phi(x)) \| \le \Lip_x t \bar{h}\| \nabla \phi(x) \|
\end{equation}
for all $0 \le t \le 1$. In this sense the Armijo step-size rule with backtracking automatically detects the locally optimal smoothness constant. We will make use of this fact in the proof of Theorem~\ref{thm: linear rate 2} further below.

\section{Block gradient descent for low-rank matrices}\label{sec: main section}

In this main section we show how to apply the above framework of gradient descent on subspaces to low-rank matrix optimization problems. We first consider the problem in the factorized form~\eqref{eq: opt problem factored form}. Here the goal is to minimize the unconstrained function
\[
g(L,R) \coloneqq f(LR^\top)
\]
for $L \in \R^{m \times k}$ and $R \in \R^{n \times k}$. The most obvious first-order method based on alternating optimization consists in taking gradient steps along the $L$ or $R$ block only, that is, taking $- \nabla_L \, g(L, R)$ or $- \nabla_R \, g(L, R)$ as a search direction. Depending on which block variable is chosen, we then update the point $X = L R^\top$ by using either
\begin{equation}\label{eq: update L with gradient}
 L_+ \coloneqq L - h \nabla_L \, g(L, R), \qquad R_+ \coloneqq R, 
\end{equation}
or
\begin{equation}\label{eq: update R with gradient}
L_+ \coloneqq L, \qquad R_+ \coloneqq R - h \nabla_R \, g(L, R).
\end{equation}
Observe that the first update~\eqref{eq: update L with gradient} is in fact a gradient step for the function $g$ when restricted to
\begin{equation}\label{eq: space T_R}
{\hat T}_R \coloneqq \{ (\hat L, R) \colon \hat L \in \R^{m \times k} \}
\end{equation}
with $R$ fixed. Likewise, the second update~\eqref{eq: update R with gradient} is a gradient step for $g$ restricted to
\begin{equation}\label{eq: space T_L}
\hat T_L \coloneqq \{ (L, \hat R) \colon \hat R \in \R^{n \times k}\}
\end{equation}
with $L$ fixed. Since $\hat T_R$ and $\hat T_L$ are \emph{affine} subspaces of $\R^{m \times k} \times \R^{n \times k}$, we can analyze these updates using the results from section~\ref{sec: gradient descent methods} by taking $\phi \coloneqq g|_{\hat T_R}$ and $\phi \coloneqq g|_{\hat T_L}$, respectively. 

To this end, let us estimate the Lipschitz constants of $\nabla_L \, g(L, R)$ and $\nabla_R \, g(L, R)$ on these affine subspaces. Note that with $X = LR^\top$ the partial gradients are given as
\begin{equation}\label{eq: partial gradients of g}
\nabla_L \, g(L, R) = \nabla f(X) R, \qquad \nabla_R \, g(L, R) = \nabla f(X)^\top L.
\end{equation}
We assume that $f$ is $\Lip$-smooth for the Frobenius norm $\| \cdot \|_\frob$ on $\R^{m \times n}$. Then
\begin{equation}\label{eq: Lipschitz constant 1}
\| \nabla_L \, g(L_1, R) - \nabla_L \, g(L_2, R) \|_\frob \le \Lip \| L_1 R^\top - L_2 R^\top \|_\frob \| R \|_2 \le \Lip \| R \|_2^2 \| L_1 - L_2 \|_\frob
\end{equation}
for all $L_1,L_2 \in \R^{m \times k}$, that is, the function $L \mapsto \nabla_L \, g(L, R)$ has a Lipschitz constant $\Lip \| R \|_2^2 $ on $\hat T_R$. Here $\|R\|_2$ denotes the spectral norm. Similarly, we have the estimate
\begin{equation}\label{eq: Lipschitz constant 2}
\| \nabla_R \, g(L, R_1) - \nabla_R \, g(L, R_2) \|_\frob \le \Lip \| L \|_2^2 \| R_1 - R_2 \|_\frob
\end{equation}
for all $R_1,R_2 \in \R^{n \times k}$, that is, we obtain the Lipschitz constant $\Lip \| L \|_2^2$.

Observe, however, the factors $L$ and $R$ in the representation of $X = LR^\top$ are not unique, and so the Lipschitz constants depend on the chosen representation. In order to get independent constants some additional normalization will be necessary. In the following we discuss two approaches. The first one, discussed in the next section, is based on balancing the singular values among the factors. The second approach, discussed afterwards, uses orthogonalization and leads to a Riemannian interpretation.

\subsection{Balanced factorized version}\label{sec: Balanced factorized version}

The low-rank representation $X = L R^\top$ is said to be \emph{balanced} if $L$ and $R$ have the same singular values. Given an economy-sized SVD $X = U \Sigma V^\top$, this is accomplished by choosing 
\[
L \coloneqq U \Sigma^{1/2}, \qquad R \coloneqq V \Sigma^{1/2}.
\]
For a balanced representation, the estimates~\eqref{eq: Lipschitz constant 1} and~\eqref{eq: Lipschitz constant 2} become
\begin{equation}\label{eq: Lipschitz for L}
\| \nabla_L \, g(L_1, R) - \nabla_L \, g(L_2, R) \|_\frob \le \Lip \| X \|_2 \| L_1 - L_2 \|_\frob
\end{equation}
and
\begin{equation}\label{eq: Lipschitz for R}
\| \nabla_R \, g(L, R_1) - \nabla_R \, g(L, R_2) \|_\frob \le \Lip \| X \|_2 \| R_1 - R_2 \|_\frob,
\end{equation}
that is, we get the Lipschitz constant $\lambda \| X \|_2$ for both partial gradients.

As mentioned above, our goal is to apply the results from section~\ref{sec: gradient descent methods} to bound the decrease in function value $f(X) - f(X_+)$ after one gradient step in either $L$ or $R$ direction. This will result in a lower bound in terms of $\| \nabla_L \, g(L, R) \|_\frob$ or $\| \nabla_R \, g(L, R) \|_\frob$ using suitable step sizes. If we now choose the block variables $L$ and $R$ according to which partial gradient is larger, we can also bound this function value decrease by the (Euclidean) norm of the full gradient, which will be denoted as
\[
\| \nabla g(L,R) \|_\frob \coloneqq \sqrt{\|\nabla_L \, g(L, R)\|_\frob^2 + \|\nabla_R \, g(L, R)\|_\frob^2}.
\]
Such a selection rule is called a Gauss--Southwell rule in the context of block coordinate descent (BCD) methods. 

To repeat the process, the factors need to be balanced after every update. This requires to compute an SVD of a $k \times k$ matrix in every step, which is cheap if $k$ is small. The final algorithm is displayed in Algorithm~\ref{algo: balanced GD}.

\begin{algorithm}[H]
\small
\caption{Balanced block gradient descent with Gauss--Southwell rule}
\label{algo: balanced GD}
\begin{algorithmic}[1]
\Input
$L_0^{} \in \R^{m \times k}$, $R_0 \in \R^{n \times k}$ balanced
\For
{$\ell = 0,1,2,\dots$}
\State
Compute $G \coloneqq \nabla f(L_\ell^{} R_\ell^\top)$ and partial gradients:
\[
(G_1, G_2) \coloneqq \nabla g (L_\ell, R_\ell) =  (G R_\ell, G^\top L_\ell).
\]
\State
Perform a gradient step according to Gauss--Southwell rule with suitable step $h_\ell$:
\[
(L_+,R_+) \coloneqq \begin{cases} (L_\ell - h_\ell G_1, R_\ell) &\quad \text{if $\|G_1 \|_\frob \geq \| G_2 \|_\frob$,} \\ (L_\ell, R_\ell - h_\ell G_2) &\quad \text{else.} \end{cases}
\]
\State
Compute the compact QR factorizations $Q_1 S_1 = L_+$ and $Q_2 S_2 = R_+$, the SVD factorization $U \Sigma V^\top = S_1^{} S_2^\top$, and balance:
\[
L_{\ell+1} \coloneqq Q_1 U \Sigma^{1/2}, \qquad  R_{\ell+1} \coloneqq Q_2 V \Sigma^{1/2}.
\]
\EndFor
\end{algorithmic}
\end{algorithm}

We can make the following convergence statements for this method, depending on the type of step-size rule that is applied.

\begin{theorem}\label{thm: balanced}
Assume that $f \colon \R^{m \times n} \to \R$ is $\Lip$-smooth and that the constrained sublevel set $N_0 \coloneqq \{X \in \mathcal{M}_{\le k} : f(X) \le f(L_0^{} R_0^\top) \}$ is contained in a ball $\{X \in \R^{m \times n} \colon \| X \|_2 \le \rho \}$. Depending on the chosen step size in the $\ell$th step, the iterates of Algorithm~\ref{algo: balanced GD} satisfy 
\begin{equation}\label{eq: general estimate balanced}
g(L_\ell,R_\ell) - g(L_{\ell+1},R_{\ell+1}) \ge \frac{\vartheta_\ell}{2 \Lip \rho} \| \nabla g(L_\ell,R_\ell) \|_\frob^2
\end{equation}
where:
\begin{itemize}
\item
$\vartheta_\ell \coloneqq 2 \alpha_\ell (1 - \alpha_\ell)$ in case a fixed step size $h_\ell = \frac{2 \alpha_\ell}{\Lip \rho}$ with $\alpha_\ell \in (0,1)$ is used;
\item
$\vartheta_\ell \coloneqq \frac{1}{2}$ in case the step size is obtained by exact line search (if well defined);
\item
$\vartheta_\ell \coloneqq \min\left\{2 \beta \gamma (1-\gamma), \Lip \rho \gamma \bar{h} \right\}$ in case the step size is obtained from backtracking $h_\ell = \bar{h}, \bar{h}\beta, \bar{h}\beta^2, \dots$ until the Armijo condition
\[
g(L_\ell,R_\ell) - g(L_{\ell+1},R_{\ell+1}) \ge \gamma h_\ell \| G_{1/2} \|_\frob^2
\]
is fulfilled (here $\beta,\gamma \in (0,1)$ and $G_{1/2}$ denotes $G_1$ or $G_2$ depending on the block selected in this step). The backtracking procedure terminates with $h_\ell = \bar{h} \beta^i$ for some $i \le \max\left\{0, \left\lceil \ln \left( \frac{2}{\bar{h} \Lip \rho}(1 - \gamma) \right) /  \ln \beta \right\rceil \right\}$.
\end{itemize}
\end{theorem}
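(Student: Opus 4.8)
The plan is to read one iteration of Algorithm~\ref{algo: balanced GD} as a single gradient descent step~\eqref{eq: gradient descent} on one of the affine subspaces $\hat T_{R_\ell}$ or $\hat T_{L_\ell}$, and then simply quote the one-step estimates of section~\ref{sec: gradient descent methods}, the only nontrivial point being to pin the smoothness constant at $\lambda\rho$. I would argue by induction on $\ell$, the inductive hypothesis being that $X_\ell \coloneqq L_\ell^{} R_\ell^\top \in N_0$ and that $(L_\ell, R_\ell)$ is a balanced factorization; the base case holds by assumption on the input. For the inductive step I would first verify that line~4 returns a balanced factorization of $X_{\ell+1} \coloneqq L_+^{} R_+^\top$: since $Q_1, Q_2$ have orthonormal columns and $U, V$ are orthogonal, one has $L_{\ell+1}^{} R_{\ell+1}^\top = Q_1^{} S_1^{} S_2^\top Q_2^\top = L_+^{} R_+^\top$ while $L_{\ell+1}$ and $R_{\ell+1}$ have the same singular values, namely the diagonal of $\Sigma^{1/2}$, so $(Q_1U)\,\Sigma\,(Q_2V)^\top$ is an SVD of $X_{\ell+1}$. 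In particular, balancedness at step $\ell$ gives $\|L_\ell\|_2^2 = \|R_\ell\|_2^2 = \|X_\ell\|_2 \le \rho$.

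With $R_\ell$ held fixed, estimate~\eqref{eq: Lipschitz for L} then shows that $\phi \coloneqq g|_{\hat T_{R_\ell}}$ is $\lambda\rho$-smooth, and \emph{globally} so on the whole affine subspace, because the bound involves only $\|R_\ell\|_2$, which is unaffected by moving the $L$-factor; symmetrically for $g|_{\hat T_{L_\ell}}$ via~\eqref{eq: Lipschitz for R}. The block step in line~3 is exactly $x_+ = x - h_\ell \nabla\phi(x)$ at $x = (L_\ell, R_\ell)$, with $\nabla\phi(x) = G_{1/2}$ the selected partial gradient. Applying the matching one-step bound from section~\ref{sec: gradient descent methods} with $\lambda$ replaced by $\lambda\rho$ — the estimate just below~\eqref{eq: gradient descent decrease} for the fixed step $h_\ell = \tfrac{2\alpha_\ell}{\lambda\rho}$; estimate~\eqref{eq: optimal decrease} when the exact line search step is well defined (here global $\lambda\rho$-smoothness on $\hat T_{R_\ell}$ covers the segment to the minimizer, so no a priori control on its location is needed); and the Armijo bound following~\eqref{eq: stop backtrack} together with the stated bound on the number of halvings — yields in every case $\phi(x) - \phi(x_+) \ge \frac{\vartheta_\ell}{\lambda\rho}\|G_{1/2}\|_\frob^2$ with $\vartheta_\ell$ as in the statement.

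The rest is bookkeeping. The Gauss--Southwell rule gives $\|G_{1/2}\|_\frob^2 = \max\{\|G_1\|_\frob^2, \|G_2\|_\frob^2\} \ge \tfrac12\|\nabla g(L_\ell, R_\ell)\|_\frob^2$, which supplies the extra factor $\tfrac12$ and hence~\eqref{eq: general estimate balanced}; and since balancing does not alter the product, $\phi(x) = g(L_\ell, R_\ell)$ and $\phi(x_+) = g(L_+, R_+) = f(X_{\ell+1}) = g(L_{\ell+1}, R_{\ell+1})$. Finally, as the right-hand side of~\eqref{eq: general estimate balanced} is nonnegative, $f(X_{\ell+1}) \le f(X_\ell) \le f(L_0^{} R_0^\top)$ and $\rank(X_{\ell+1}) \le k$, so $X_{\ell+1} \in N_0$, closing the induction. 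I do not expect a genuine obstacle; the two points that need care are to use the smoothness constant attached to the \emph{current} balanced factorization (not the updated one) and to note that this constant is uniform over the entire affine subspace — which is precisely what lets the adaptive step sizes, exact line search in particular, go through without any further boundedness assumption on the trial iterates.
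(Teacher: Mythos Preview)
Your proposal is correct and follows essentially the same approach as the paper: reduce one iteration to a gradient step on the affine subspace $\hat T_{R_\ell}$ or $\hat T_{L_\ell}$, use balancedness and $X_\ell\in N_0$ to fix the smoothness constant at $\lambda\rho$, apply the section~\ref{sec: gradient descent methods} estimates, and use the Gauss--Southwell rule for the factor $\tfrac12$, closing the induction via monotonicity of $f$. You spell out a couple of details (preservation of balancedness by line~4, and the observation that the $\lambda\rho$-smoothness is global on the subspace so exact line search is covered) that the paper leaves implicit, but the argument is the same.
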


\begin{corollary}\label{cor: balanced}
Under the conditions of Theorem~\ref{thm: balanced}, let $(L_\ell,R_\ell)$ be the iterates of Algorithm~\ref{algo: balanced GD} using any combination of the considered step-size rules. However, if step sizes $h_\ell = \frac{2 \alpha_\ell}{\Lip\rho}$ from the first step-size rule are used infinitely often, assume (for this subsequence) $\inf_\ell \alpha_\ell > 0$ and $\sup_\ell \alpha_\ell < 1$. Then the generated sequence $g(L_\ell,R_\ell) = f(L_\ell^{} R_\ell^\top)$ of function values is monotonically decreasing and converges to some value $f_* \ge \min_{X \in \mathcal{M}_{\le k}} f(X)$. Moreover, $\nabla g(L_\ell,R_\ell) \to 0$ and the sequence $(L_\ell,R_\ell)$ has at least one accumulation point. Every accumulation point $(L_*,R_*)$ satisfies $g(L_*,R_*) = f_*$ and $\nabla g(L_*,R_*) = 0$. For every nonnegative integer $j$ it holds that
\begin{equation}\label{eq: min gradient}
\min_{0 \le \ell \le j} \| \nabla g(L_\ell,R_\ell) \|_\frob \le  \left( \frac{2\Lip \rho}{\vartheta} \cdot \frac{f(X_0) - f_*}{j+1} \right)^{1/2},
\end{equation}
where $\vartheta \coloneqq \inf_\ell \vartheta_\ell > 0$. 
In particular, given $\varepsilon > 0$ the algorithm returns an $\varepsilon$-critical point, i.e., a point $(L, R)$ such that $\| \nabla g(L, R) \|_\frob \le \varepsilon$, after at most $ \lceil 2\Lip \rho (f(X_0) - f_*)\vartheta^{-1} \varepsilon^{-2} - 1 \rceil$ iterations.
\end{corollary}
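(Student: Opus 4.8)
The plan is to obtain every assertion of Corollary~\ref{cor: balanced} from the single one-step decrease estimate \eqref{eq: general estimate balanced}, which Theorem~\ref{thm: balanced} already provides at every iterate (its proof handling, by induction on the descent property, the fact that all $X_\ell$ remain in $N_0$ and hence in the ball of radius $\rho$, so that the Lipschitz constant $\Lip\rho$ from \eqref{eq: Lipschitz for L}--\eqref{eq: Lipschitz for R} applies). First I would check that $\vartheta_\ell > 0$ for each admitted step-size rule, so that the right-hand side of \eqref{eq: general estimate balanced} is nonnegative and $g(L_\ell,R_\ell) = f(L_\ell^{}R_\ell^\top)$ is monotonically nonincreasing. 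Since $N_0$ is a closed subset of the compact ball $\{X : \|X\|_2 \le \rho\}$, it is itself compact, so $f$ is bounded below on it; together with monotonicity this gives convergence of $f(X_\ell)$ to some $f_* \ge \min_{X \in \M_{\le k}} f(X)$, the lower bound because every $X_\ell$ lies in $\M_{\le k}$.

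Next I would pin down the uniform constant $\vartheta := \inf_\ell \vartheta_\ell > 0$. For backtracking and exact line search $\vartheta_\ell$ is a fixed positive number, so the only delicate case is when the fixed step-size rule $h_\ell = 2\alpha_\ell/(\Lip\rho)$ is used infinitely often; there the hypotheses $a := \inf_\ell \alpha_\ell > 0$ and $b := \sup_\ell \alpha_\ell < 1$ enter, and since $\alpha \mapsto 2\alpha(1-\alpha)$ is concave it attains its infimum over $[a,b] \subset (0,1)$ at an endpoint, so $\vartheta_\ell \ge \min\{2a(1-a),\,2b(1-b)\} > 0$ on that subsequence; combining with the fixed values of the other rules yields $\vartheta > 0$.

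The core estimate is then a telescoping sum: adding \eqref{eq: general estimate balanced} over $\ell = 0,\dots,j$ and using $\vartheta_\ell \ge \vartheta$ gives
\[
\frac{\vartheta}{2\Lip\rho}\sum_{\ell=0}^{j}\|\nabla g(L_\ell,R_\ell)\|_\frob^2 \;\le\; g(L_0,R_0) - g(L_{j+1},R_{j+1}) \;\le\; f(X_0) - f_*.
\]
Letting $j \to \infty$ shows the series $\sum_\ell \|\nabla g(L_\ell,R_\ell)\|_\frob^2$ converges, hence $\nabla g(L_\ell,R_\ell) \to 0$. Replacing each summand by the minimum over $0 \le \ell \le j$ and rearranging gives exactly \eqref{eq: min gradient}, and demanding the right-hand side of \eqref{eq: min gradient} to be at most $\varepsilon$ yields $j + 1 \ge 2\Lip\rho(f(X_0)-f_*)\vartheta^{-1}\varepsilon^{-2}$, i.e.\ the stated iteration count $\lceil 2\Lip\rho(f(X_0)-f_*)\vartheta^{-1}\varepsilon^{-2} - 1\rceil$.

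For the claim about accumulation points, I would first note that the balancing step of Algorithm~\ref{algo: balanced GD} outputs $L_\ell = Q_1 U\Sigma^{1/2}$ and $R_\ell = Q_2 V\Sigma^{1/2}$ with $Q_1,Q_2,U,V$ having orthonormal columns and $\Sigma$ the singular-value matrix of $X_\ell$, so $\|L_\ell\|_\frob^2 = \|R_\ell\|_\frob^2 = \tr\Sigma \le k\|X_\ell\|_2 \le k\rho$; thus $(L_\ell,R_\ell)$ is bounded and, by Bolzano--Weierstrass, has an accumulation point $(L_*,R_*)$. Both $g$ and $\nabla g(L,R) = \big(\nabla f(LR^\top)R,\ \nabla f(LR^\top)^\top L\big)$ are continuous (composition of the continuous $\nabla f$ with polynomial maps in $L,R$), so passing to a subsequence converging to $(L_*,R_*)$ and using $g(L_\ell,R_\ell)\to f_*$ and $\nabla g(L_\ell,R_\ell)\to 0$ forces $g(L_*,R_*) = f_*$ and $\nabla g(L_*,R_*) = 0$. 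The one place that needs genuine care is the verification that $\vartheta > 0$ uniformly across the admitted combinations of step-size rules --- especially making explicit why two-sided control of the $\alpha_\ell$ (not merely $\alpha_\ell \in (0,1)$ pointwise) is exactly what the summation needs --- while the rest is a routine telescoping-plus-compactness argument; one should also tacitly assume, in line with Theorem~\ref{thm: balanced}, that the exact line search is well defined whenever it is invoked.
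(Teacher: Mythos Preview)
Your proposal is correct and follows essentially the same route as the paper: telescope the one-step decrease estimate~\eqref{eq: general estimate balanced} to get convergence of function values, summability and hence vanishing of the gradients, and the min-gradient bound, then use boundedness of the balanced factors (the paper writes $\|L_\ell\|_2 = \|R_\ell\|_2 = \|X_\ell\|_2^{1/2}$ rather than your Frobenius-norm estimate, but either works) plus continuity for the accumulation-point claims. Your explicit verification that $\vartheta = \inf_\ell \vartheta_\ell > 0$ under the stated hypotheses on the $\alpha_\ell$ is a detail the paper leaves implicit, but otherwise the arguments coincide.
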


Note that $0 < \vartheta_\ell \le \frac{1}{2}$ in all cases of Theorem~\ref{thm: balanced} and hence $0 < \vartheta \le \frac{1}{2}$ in Corollary~\ref{cor: balanced}.

\begin{remark}\label{rem: critical points}
Before giving the proofs, we make some remarks. The first is that $\lambda$-smoothness is in principle only required on the sublevel set $N_0$ of the starting point. In addition, the exact line search is well defined since $N_0$ is bounded.

The second remark is that in the version with exact line search one could, as an a~posteriori estimate, replace for given $\varepsilon > 0$ and large enough $\ell = \ell(\varepsilon)$ the constant $\rho$ in~\eqref{eq: general estimate balanced} with $\max \| X_* \|_2 + \varepsilon$ where $\max \| X_* \|_2$ is the maximum spectral norm among all accumulation points. (For the other two versions the accumulation points depend on $\rho$ through step sizes.)

Finally, as mentioned in the introduction, the result $\nabla g(L_*,R_*) = 0$ does not necessarily imply that $X_*^{} = L_*^{}R^\top_*$ is a critical point of $f$ on the variety $\mathcal M_{\le k}$ (in the sense that the tangent cone to $\mathcal M_{\le k}$ at $X_*$ contains no descent direction for $f$). By \cite[Proposition~2.8]{LevinKileelBoumal2024}, we know that the implication holds if $\rank(L_*) = \rank(R_*) = k$ but may not hold otherwise. If the implication does not hold, $X_*$ is only critical for $f$ on the manifold of matrices with fixed rank $s \coloneqq \rank(X_*) < k$. Such a triplet $(X_*, (L_\ell^{} R_\ell^\top)_{\ell \in \mathbb{N}}, f)$ is called an apocalypse in~\cite{LevinKileelBoumal2023}. An apocalypse-free version of Algorithm~\ref{algo: Subspace GD} based on the original method from~\cite[Algorithm~4]{Schneider2015} has been recently proposed in~\cite[Algorithm~3]{OlikierAbsil2023}.
\end{remark}

\begin{proof}[Proof of Theorem~\ref{thm: balanced}]
Let $(L_\ell, R_\ell)$ be balanced such that $X_\ell^{} = L_\ell^{} R_\ell^\top \in N_0$. As noted above, the single updates are gradient descent steps for the function $g$ restricted to the subspaces $\hat T_{L_\ell}$ and $\hat T_{R_\ell}$, respectively, defined in~\eqref{eq: space T_R} and~\eqref{eq: space T_L}. By~\eqref{eq: Lipschitz for L} and~\eqref{eq: Lipschitz for R}, these restrictions are $(\Lip\rho)$-smooth and the results of section~\ref{sec: gradient descent methods} allow us to conclude that
\[
f(X_\ell) - f(X_{\ell+1}) = g(L_\ell,R_\ell) - g(L_{\ell+1},R_{\ell+1}) \ge \frac{\vartheta_\ell}{\Lip \rho} \| G_{1/2} \|_\frob^2
\]
with $\vartheta_\ell$ determined by the step-size rule as specified in the statement of the theorem. Since $\| \nabla g(L_\ell,R_\ell) \|_\frob^2 = \| G_1 \|_\frob^2 + \| G_2 \|_\frob^2$, the maximum block selected by the Gauss--Southwell rule satisfies $\| G_{1/2} \|_\frob^2 \ge \frac12\| \nabla g(L_\ell,R_\ell) \|_\frob^2$. This shows~\eqref{eq: general estimate balanced} after induction since $X_{\ell+1}^{} = L_{\ell+1}^{} R_{\ell+1}^\top \in N_0$ by the above estimates.
\end{proof}

\begin{proof}[Proof of Corollary~\ref{cor: balanced}]
Since the sequence $g(L_\ell, R_\ell) = f(X_\ell)$ is monotonically decreasing and bounded from below (since $N_0$ is bounded), it converges to $f_* \coloneqq \inf_{\ell \in \mathbb{N}} f(X_\ell)$. Thus, for every $j \in \mathbb{N}$, it holds that
\[
f(X_0) - f_* \ge f(X_0) - f(X_{j+1}) = \sum_{\ell=0}^{j} f(X_\ell) - f(X_{\ell+1}) \ge \sum_{\ell=0}^{j} \frac{\vartheta}{2\Lip\rho} \| \nabla g(L_\ell,R_\ell) \|_\frob^2,
\]
using the estimates from Theorem~\ref{thm: balanced}. Since $j$ is arbitrary, this shows that $\nabla g(L_\ell,R_\ell) \to 0$. On the other hand, there must be at least one $0 \le \ell \le j$ satisfying
\[
\frac{\vartheta}{2\Lip \rho}\| \nabla g(L_\ell,R_\ell) \|_\frob^2 \le \frac{f(X_0) - f_*}{j+1}.
\]
This shows~\eqref{eq: min gradient}. Finally, since $N_0$ is bounded and $\| L_\ell \|_2 = \| R_\ell \|_2 = \| X_\ell \|_2^{1/2}$, the sequence $(L_\ell,R_\ell)$ is bounded and thus has at least one accumulation point~$(L_*,R_*)$. By continuity, $g(L_*,R_*) = f_*$ and $\nabla g(L_*,R_*) = 0$.
\end{proof}

\subsection{Embedded Riemannian version}\label{sec: embedded version}

As derived in~\eqref{eq: partial gradients of g}, the search directions in a single step of the ``vanilla'' block gradient descent method for the function $g(L,R) \coloneqq f(LR^\top)$ depend on the chosen factorization $X = LR^\top$. In Algorithm~\ref{algo: balanced GD} this ambiguity is resolved using a balancing, which is a reasonable approach, but arguably still somewhat arbitrary. In particular, it leads to the appearance of the spectral norm of $X$ in the Lipschitz constants (see~\eqref{eq: Lipschitz for L} and~\eqref{eq: Lipschitz for R}) and thus in the convergence estimates (through the constant $\rho$). If instead of balancing one could choose the factor that is currently \emph{not} optimized to have orthonormal columns, this artefact would disappear (see~\eqref{eq: Lipschitz constant 1} and~\eqref{eq: Lipschitz constant 2}). In a cyclic update scheme for the factors this can be easily achieved and has been considered in many works. However, for the Gauss--Southwell rule it is not so clear how to combine it with orthogonalization. And even so, the partial gradients of $g$ would still depend on the choice of the orthogonal factor, which is not unique.

A natural solution to these issues with the factorized approach can be obtained by taking a subspace viewpoint in the actual ambient space $\R^{m \times n}$. For this, besides the low-rank factorization $X = LR^\top$ we also consider in the sequel an SVD-like decomposition $X = USV^\top$ where $S \in \R^{k \times k}$ and $U$ and $V$ have pairwise orthonormal columns. Assume $\rank(X) = k$. We now consider the subspace of matrices whose row space is contained in the row space of $X$,
\begin{equation*}\label{eq: subspace TV}
T_V \coloneqq \{ \hat L V^\top \colon \hat L \in \R^{m \times k} \} \subseteq \R^{m \times n},
\end{equation*}
and likewise for the column space,
\begin{equation*}\label{eq: subspace TU}
T_U \coloneqq \{ U \hat R^\top \colon \hat R \in \R^{n \times k} \} \subseteq \R^{m \times n}.
\end{equation*}
Observe that $T_V$ and $T_U$ are \emph{linear} subspaces contained in $\mathcal M_{\le k}$ and both of them contain $X$. They are the respective images of the affine subspaces $\hat{T}_R$ and $\hat{T}_L$ from~\eqref{eq: space T_R} and~\eqref{eq: space T_L} under the map $\R^{m \times k} \times \R^{n \times k} \to \R^{m \times n} : (\hat{L}, \hat{R}) \mapsto \hat{L}\hat{R}^\top$. (When $\rank(X) < k$ some subtleties regarding the choice of $U$ and $V$ occur.) However, contrary to $\hat{T}_R$ and $\hat{T}_L$, the subspaces $T_V$ and $T_U$ are uniquely determined by $X$ (in case $\rank(X) = k$) and do not depend on any fixed factorization: even if $V$ is not uniquely defined from $X=USV^\top$, any equivalent choice is of the form $VQ$ for some orthogonal $Q$ since $\rank(X) = k$, and obviously it then holds $T_{VQ} = T_V$. The same observation holds for $U$ and $T_U$.

Any optimization method based on sequential optimization of factors $L$ and $R$ in $X = L R^\top$ implicitly selects a search direction in the linear space $T_V$ or $T_U$. For instance, by~\eqref{eq: partial gradients of g}, the negative partial gradient $-\nabla_L \, g(L, R) = -\nabla f(X)R$ can be identified with the negative gradient of the restriction of $g$ to $\hat{T}_R$, namely $(-\nabla f(X)R, R)$, which is an element of $\hat{T}_R$ whose corresponding element in $T_V$ is $-\nabla f(X)RR^\top$. In line with what was noted above, this particular direction depends on the choice of $R$ (and $L$) in the decomposition $X = LR^\top$.

Gradient-related search directions that are independent from the factorization can be obtained based on information of the gradient $\nabla f(X)$ in the ambient space only. A natural choice is to consider the best approximations of $- \nabla f(X)$ in $T_V$ or $T_U$ w.r.t.~the Frobenius inner product. These can be easily computed as the orthogonal projections of $- \nabla f(X)$ at $X = USV^\top$ onto $T_V$ or $T_U$ using the formulas
\begin{equation*}\label{eq: projection T1 and T2}
\mathcal P_{V}(Z) = Z V V^\top, \qquad
\mathcal P_{U}(Z) = U U^\top Z.
\end{equation*}
This yields the following type of iteration in the set $\mathcal{M}_{\le k}$:
\begin{itemize}\itemsep0pt
	\item[(1)] Given $X = USV^\top \in \mathcal{M}_{\le k}$, choose an update direction: 
	\[
	P_1 \coloneqq \mathcal P_{V} (\nabla f(X)) = \nabla f(X) V V^\top \quad \text{or} \quad P_2 \coloneqq \mathcal P_{U} (\nabla f(X)) = U U^\top \nabla f(X).
	\]	 
	\item[(2)] Perform the step:
	$X_+ \coloneqq X - h P_{1/2}$.
\end{itemize}

In terms of a factorization $X = LR^\top$, the two possible updates $X_+$ in this iteration can be expressed using pseudo-inverses and~\eqref{eq: partial gradients of g} as (assuming $\rank(X) = k$)
\begin{align*}
X_+ &\coloneqq LR^\top - h \nabla f(X) R R^\dagger  = [L - h \nabla_L \, g(L, R) \cdot (R^\top R)^{-1}] R^\top \\ \text{or} \qquad  X_+ &\coloneqq LR^\top - h L L^\dagger \nabla f(X) = L [R - h \nabla_R \, g(L, R) \cdot (L^\top L)^{-1} ]^\top.
\end{align*}
As announced in the introduction, this shows that the Gauss--Southwell updates implicitly perform the updates
\begin{equation}\label{eq:update scaledGD}
L_+ \coloneqq L - h \nabla_L \, g(L, R) \cdot (R^\top R)^{-1} \quad \text{or} \quad R_+ \coloneqq R - h \nabla_R \, g(L, R) \cdot (L^\top L)^{-1}
\end{equation}
for the factors $L$ and/or $R$, which coincide with the well-known~\emph{scaled gradient descent} update formulas; see, e.g.,~\cite[(3)]{TongMaChi2021}.

Obviously, the projected gradients $P_1$ and $P_2$ equal the gradients of the restriction of~$f$ to the linear subspaces $T_V$ and $T_U$, respectively. Furthermore, if $f$ is $\Lip$-smooth in Frobenius norm, then so is its restriction to any linear subspace. This means we can apply the abstract results recorded in section~\ref{sec: gradient descent methods} to analyze one step of such a method and obtain an estimate
\begin{equation}\label{eq: estimate subspace}
f(X) - f(X_+) \ge \frac{\vartheta}{\lambda} \| P_{1/2} \|_\frob^2,
\end{equation}
where the value of $\vartheta$ depends on the step-size rule and is specified in Theorem~\ref{thm: subspace} below.

The estimate~\eqref{eq: estimate subspace} indicates that a Gauss--Southwell rule could be useful in deciding whether $P_1$ or $P_2$ should be selected for an update direction. This has been proposed in~\cite[Algorithm~4]{Schneider2015} in a slightly more elaborate setup for dealing with rank-deficient iterates. Indeed, we can motivate the Gauss--Southwell rule from the perspective of Riemannian optimization as follows. It is well known that if $X = USV^\top$ has rank $k$, then the linear space
\begin{equation*}\label{eq: tangent space}
T_{U,V} \coloneqq T_U + T_V
\end{equation*}
can be identified with the tangent space to the embedded submanifold $\mathcal{M}_k$ of rank-$k$ matrices at~$X$. From a geometric perspective, our goal in low-rank optimization methods is to drive the Riemannian gradient of $f$ to zero. When taking the embedded metric the following definition applies.

\begin{definition*}
The \emph{Riemannian gradient} of $f$ at $X \in \R^{m \times n}$, denoted by $\grad f(X)$, is the orthogonal projection of $\nabla f(X)$ onto the tangent space to $\mathcal M_{\rank(X)}$ at $X$.
\end{definition*}

Note that in this definition the fixed-rank manifold $\mathcal M_{\rank(X)}$ to which the Riemannian gradient relates is implicitly determined by $X$. Hence, when $\rank(X) = k$, it holds
\begin{equation*}\label{eq: Riemannian gradient}
\grad f(X) = \mathcal P_{U,V} (\nabla f(X) ),
\end{equation*}
where $\mathcal P_{U,V}$ is the orthogonal projector onto $T_{U,V}$, given by
\begin{equation*}\label{eq: projection on tangent space}
\mathcal P_{U,V}(Z) = Z VV^\top + UU^\top Z - U U^\top Z V V^\top,
\end{equation*}
with
\[
 \| \mathcal P_{U,V}(Z) \|_\frob^2 = \| Z VV^\top \|_\frob^2 + \| UU^\top Z \|_\frob^2 - \|U U^\top Z V V^\top \|_\frob^2.
\]
Applied to $\nabla f(X) $, this then implies
\[
\| \mathcal P_{U,V} (\nabla f(X) ) \|_\frob^2 \le \| \nabla f(X) V V^\top \|_\frob^2 + \| U U^\top \nabla f(X) \|_\frob^2 = \| P_1 \|_\frob^2 + \| P_2 \|_\frob^2,
\]
which leads to the important observation 
\[
\max\{\| P_1 \|_\frob^2, \| P_2 \|_\frob^2\} \ge \frac{1}{2} \|  \mathcal P_{U,V} (\nabla f(X) ) \|_\frob^2.
\]
As a result, employing a Gauss--Southwell rule will allow to turn the estimate~\eqref{eq: estimate subspace} into
\begin{equation}\label{eq: decrease Riemannian}
f(X) - f(X_+) \ge \frac{\vartheta}{2 \lambda} \| \grad f(X) \|_\frob^2.
\end{equation}
Based on this, a method with Gauss--Southwell rule allows for convergence statements regarding the Riemannian gradient without requiring any curvature bounds. The result is stated in Theorem~\ref{thm: subspace} below.

Some subtleties arise when $X = USV^\top$ has rank less than $k$ but the matrix $S$ has size $k \times k$. This can happen in our algorithm since the sizes of the decomposition are fixed. It is not difficult to show that the corresponding tangent space to $\mathcal M_{\rank(X)}$ at $X$ is then contained in the space $T_{U,V}$. Hence we still get the relation
\[
\| \mathcal P_{U,V} (\nabla f(X) ) \|_\frob \ge \|\grad f (X) \|_\frob
\]
so that~\eqref{eq: decrease Riemannian} is valid for all $X = USV^\top$ when using the Gauss--Southwell rule. 

While we have derived the method in the ambient space $\R^{m \times n}$, its practical implementation is of course done in the factorized form to keep benefit of the low-rank representation and avoid forming potentially huge matrices. Note in this context that we can exploit $\| \nabla f(X) V V^\top \|_\frob = \| \nabla f(X) V \|_\frob$ and $\| U U^\top \nabla f(X)  \|_\frob = \| U^\top \nabla f(X) \|_\frob$ for deciding the Gauss--Southwell rule. In this form the algorithm is outlined in Algorithm~\ref{algo: Subspace GD}, where $\qr(Y) = (Q,R)$ denotes a QR decomposition of $Y = QR$ with $k$ columns. For every iterate of rank $k$, a step of Algorithm~\ref{algo: Subspace GD} is equivalent to a step of \cite[Algorithm~4]{Schneider2015}.

\begin{algorithm}[H]
\small
\caption{Riemannian block gradient descent with Gauss--Southwell rule}
\label{algo: Subspace GD}
\begin{algorithmic}[1]
\Input
SVD-like decomposition $X_0 = U_0^{} S_0^{} V_0^\top$ where $U_0 \in \R^{m \times k}$ and $V_0 \in \R^{n \times k}$, $U_0^\top U_0^{} = V_0^\top V_0^{} = I_k$.
\For
{$\ell = 0,1,2,\dots$}
\State
Compute gradient $G \coloneqq \nabla f(U_\ell^{} S_\ell^{} V_\ell^\top)$ and
\[
(\hat P_1, \hat P_2) \coloneqq (G V_\ell, G^\top U_\ell).
\]
\State
Perform a gradient step according to the Gauss--Southwell rule with suitable step size $h_\ell$:
\If
{$\| \hat P_1 \|_\frob \ge \| \hat P_2 \|_\frob$}
\State
\[
(U_{\ell+1}, S_{\ell+1}) \coloneqq \qr(U_\ell S_\ell - h_\ell \hat P_1),\qquad V_{\ell+1} \coloneqq V_\ell,
\]
\Else
\State
\[
U_{\ell+1} \coloneqq U_\ell,\qquad (V_{\ell+1},S_{\ell+1}) \coloneqq \qr(V_\ell S_\ell - h_\ell \hat P_2).
\]
\EndIf
\EndFor
\end{algorithmic}
\end{algorithm}

The convergence statements for this method parallel those in Theorem~\ref{thm: balanced}. The difference is that they relate to the projected (Riemannian) gradients $\mathcal P_{U,V}(\nabla f(X))$ instead of $\nabla g(L,R)$, and only depend on the Lipschitz constant of $\nabla f(X)$. 

\begin{theorem}\label{thm: subspace}
Assume $f \colon \R^{m \times n} \to \R$ is $\Lip$-smooth and bounded from below. Depending on the chosen step size in the $\ell$th step, the iterates $X_\ell^{} = U_\ell^{} S_\ell^{} V_\ell^\top$ of Algorithm~\ref{algo: Subspace GD} satisfy 
\begin{equation}\label{eq: general estimate subspace}
f(X_\ell) - f(X_{\ell+1}) \ge \frac{\vartheta_\ell}{2\Lip} \| \mathcal{P}_{U_\ell,V_\ell} ( \nabla f(X_\ell)) \|_\frob^2 \ge \frac{\vartheta_\ell}{2\Lip} \| \grad f(X_\ell) \|_\frob^2,
\end{equation}
where:
\begin{itemize}
\item
$\vartheta_\ell \coloneqq 2 \alpha_\ell (1 - \alpha_\ell)$ in case a step size $h_\ell = \frac{2 \alpha_\ell}{\Lip}$ with $\alpha_\ell \in (0,1)$ is used;
\item
$\vartheta_\ell \coloneqq \frac{1}{2}$ in case the step size is obtained by exact line search (if well defined);
\item
$\vartheta_\ell \coloneqq \min\left\{2 \beta \gamma (1-\gamma), \Lip \gamma \bar{h} \right\}$ in case the step size is obtained from backtracking $h_\ell = \bar{h}, \bar{h}\beta, \bar{h}\beta^2, \dots$ until the Armijo condition
\[
f(X_\ell) - f(X_{\ell+1}) \ge \gamma h_\ell \| P_{1/2} \|_\frob^2
\]
is fulfilled (here $\beta,\gamma \in (0,1)$ and $P_{1/2}$ denotes $P_1$ or $P_2$ depending on the chosen block). The backtracking procedure terminates with $h_\ell = \bar{h} \beta^i$ for some $i \le \max\left\{0, \left\lceil \ln \left( \frac{2}{\bar{h} \Lip}(1 - \gamma) \right) /  \ln \beta \right\rceil\right\}$.
\end{itemize}
\end{theorem}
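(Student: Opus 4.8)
The plan is to mimic the proof of Theorem~\ref{thm: balanced}, except that everything now takes place in the ambient space $\R^{m \times n}$; this is what removes the dependence on a radius $\rho$ and replaces $\Lip\rho$ by $\Lip$. First I would make explicit that a single iteration of Algorithm~\ref{algo: Subspace GD} is a gradient-descent step~\eqref{eq: gradient descent} for the restriction of $f$ to one of the \emph{linear} subspaces $T_{V_\ell}$ or $T_{U_\ell}$. Indeed, in the branch $\|\hat P_1\|_\frob \ge \|\hat P_2\|_\frob$ the QR re-parameterization leaves the underlying matrix untouched, so that
\[
X_{\ell+1} = (U_\ell S_\ell - h_\ell \hat P_1) V_\ell^\top = U_\ell S_\ell V_\ell^\top - h_\ell \nabla f(X_\ell) V_\ell V_\ell^\top = X_\ell - h_\ell P_1,
\]
and since $P_1 = \mathcal P_{V_\ell}(\nabla f(X_\ell))$ is exactly the Frobenius-orthogonal projection of $\nabla f(X_\ell)$ onto $T_{V_\ell}$, hence the gradient of $f|_{T_{V_\ell}}$ at $X_\ell \in T_{V_\ell}$, this is the step~\eqref{eq: gradient descent} with $\phi \coloneqq f|_{T_{V_\ell}}$; the other branch is identical with $\phi \coloneqq f|_{T_{U_\ell}}$ and $P_2$. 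Because $f$ is $\Lip$-smooth on all of $\R^{m\times n}$, each such restriction to a linear subspace is again $\Lip$-smooth, so the results of section~\ref{sec: gradient descent methods} apply verbatim with constant $\Lip$.

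Next I would insert the one-step decrease estimate from section~\ref{sec: gradient descent methods} appropriate to the chosen step-size rule: for $h_\ell = 2\alpha_\ell/\Lip$ this is~\eqref{eq: gradient descent decrease}, giving $\vartheta_\ell = 2\alpha_\ell(1-\alpha_\ell)$; for exact line search it is~\eqref{eq: optimal decrease}, giving $\vartheta_\ell = \frac12$ (valid since global $\Lip$-smoothness of $f$ also makes the function restricted to the search line $\Lip$-smooth); and for backtracking it is the estimate obtained around~\eqref{eq: Armijo condition}--\eqref{eq: stop backtrack}, giving $\vartheta_\ell = \min\{2\beta\gamma(1-\gamma),\Lip\gamma\bar h\}$ together with the stated bound on the number of backtracking trials, which follows from~\eqref{eq: stop backtrack}. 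In all three cases this produces
\[
f(X_\ell) - f(X_{\ell+1}) \ \ge\ \frac{\vartheta_\ell}{\Lip}\,\|P_{1/2}\|_\frob^2,
\]
where $P_{1/2}$ is the projected gradient selected in this step.

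It then remains to convert $\|P_{1/2}\|_\frob^2$ into the two right-hand sides of~\eqref{eq: general estimate subspace}. Since $V_\ell$ and $U_\ell$ have orthonormal columns, $\|P_1\|_\frob = \|\hat P_1\|_\frob$ and $\|P_2\|_\frob = \|\hat P_2\|_\frob$, so the Gauss--Southwell choice selects the larger of $\|P_1\|_\frob,\|P_2\|_\frob$ and hence $\|P_{1/2}\|_\frob^2 = \max\{\|P_1\|_\frob^2,\|P_2\|_\frob^2\} \ge \frac12\big(\|P_1\|_\frob^2 + \|P_2\|_\frob^2\big)$. Combining this with the inequality $\|\mathcal P_{U_\ell,V_\ell}(\nabla f(X_\ell))\|_\frob^2 \le \|P_1\|_\frob^2 + \|P_2\|_\frob^2$ recorded just before the theorem yields the first inequality in~\eqref{eq: general estimate subspace}, and combining it further with $\|\mathcal P_{U_\ell,V_\ell}(\nabla f(X_\ell))\|_\frob \ge \|\grad f(X_\ell)\|_\frob$ yields the second; this last relation is an equality when $\rank(X_\ell) = k$ and a genuine inequality when $\rank(X_\ell) < k$, because the tangent space to $\mathcal M_{\rank(X_\ell)}$ at $X_\ell$ is then contained in $T_{U_\ell,V_\ell}$, as observed in the text.

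I do not expect a genuine obstacle here, since the substantive geometric content---the identification of the scaled/projected updates with subspace gradient steps, the projection-norm inequality, and the rank-deficient comparison---has already been assembled in the discussion preceding the statement. The only point requiring some care is the bookkeeping: making the QR re-parameterization transparent so that the identity $X_{\ell+1} = X_\ell - h_\ell P_{1/2}$ is manifest, and matching each of the three values of $\vartheta_\ell$ and the backtracking step-count bound to the corresponding rule exactly as in the proof of Theorem~\ref{thm: balanced}.
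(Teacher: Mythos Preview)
Your proposal is correct and follows precisely the approach the paper intends: the paper states that the proof is analogous to that of Theorem~\ref{thm: balanced} and omits it, and your write-up fills in exactly those details using the observations on $T_{U_\ell}$, $T_{V_\ell}$, the $\Lip$-smoothness of the restrictions, and the Gauss--Southwell inequality $\|P_{1/2}\|_\frob^2 \ge \tfrac12\|\mathcal P_{U_\ell,V_\ell}(\nabla f(X_\ell))\|_\frob^2$ already recorded in section~\ref{sec: embedded version}.
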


\begin{corollary}\label{cor: subspace}
Under the conditions of Theorem~\ref{thm: subspace}, let $(X_\ell)$ be the iterates of Algorithm~\ref{algo: Subspace GD} using any combination of the considered step-size rules. However, if step sizes $h_\ell = \frac{2 \alpha_\ell}{\Lip}$ from the first step-size rule are used infinitely often, assume (for this subsequence) $\inf_\ell \alpha_\ell > 0$ and $\sup_\ell \alpha_\ell < 1$. Then the generated sequence $f(X_\ell)$ of function values is monotonically decreasing and converges to some value $f_* \ge \inf_{X \in \mathcal{M}_{\le k}} f(X)$. Moreover, $\mathcal P_{U_\ell,V_\ell}(\nabla f(X_\ell)) \to 0$ and $\grad f(X_\ell) \to 0$. Every accumulation point $X_*$ of the sequence $(X_\ell)$ satisfies $f(X_*) = f_*$ and $\grad f(X_*) = 0$, which means that $X_*$ is a critical point of $f$ on the manifold $\mathcal{M}_{k'}$ where $k' \coloneqq \rank(X_*) \le k$. For every nonnegative integer $j$ it holds that
\begin{equation*}\label{eq: min gradient subspace}
\min_{0 \le \ell \le j} \| \grad f(X_\ell) \|_\frob \le \min_{0\le \ell \le j} \| \mathcal{P}_{U_\ell,V_\ell}(\nabla f(X_\ell)) \|_\frob \le  \left( \frac{2\Lip}{\vartheta} \cdot \frac{f(X_0) - f_*}{j+1} \right)^{1/2},
\end{equation*}
where $\vartheta \coloneqq \inf_\ell \vartheta_\ell > 0$. In particular, given $\varepsilon > 0$ the algorithm returns a point with $\| \grad f(X_\ell) \|_\frob \le \varepsilon$ after at most $\lceil 2\Lip (f(X_0) - f_*) \vartheta^{-1} \varepsilon^{-2} - 1 \rceil$ iterations.
\end{corollary}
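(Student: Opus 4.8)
The plan is to mirror the proof of Corollary~\ref{cor: balanced}, with the projected (Riemannian) gradient $\mathcal P_{U_\ell,V_\ell}(\nabla f(X_\ell))$ playing the role of $\nabla g(L_\ell,R_\ell)$ and Theorem~\ref{thm: subspace} used in place of Theorem~\ref{thm: balanced}. First I would check $\vartheta \coloneqq \inf_\ell \vartheta_\ell > 0$: in the exact line search case $\vartheta_\ell = \tfrac12$; in the backtracking case $\vartheta_\ell = \min\{2\beta\gamma(1-\gamma),\Lip\gamma\bar h\}$ is a fixed positive constant; and for the fixed step size $\vartheta_\ell = 2\alpha_\ell(1-\alpha_\ell) \ge 2(\inf_\ell\alpha_\ell)(1-\sup_\ell\alpha_\ell) > 0$ by the standing assumption on the subsequence of such steps. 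Combined with the first inequality in \eqref{eq: general estimate subspace} this gives $f(X_\ell)-f(X_{\ell+1}) \ge \tfrac{\vartheta}{2\Lip}\|\mathcal P_{U_\ell,V_\ell}(\nabla f(X_\ell))\|_\frob^2$ for every $\ell$.

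Since $f$ is bounded from below and $(f(X_\ell))$ is nonincreasing, it converges to $f_* \coloneqq \inf_\ell f(X_\ell)$, and $f_* \ge \inf_{X\in\M_{\le k}}f(X)$ because each $X_\ell = U_\ell S_\ell V_\ell^\top$ has rank at most $k$. Telescoping the descent inequality over $\ell=0,\dots,j$ yields $f(X_0)-f_* \ge f(X_0)-f(X_{j+1}) \ge \sum_{\ell=0}^{j}\tfrac{\vartheta}{2\Lip}\|\mathcal P_{U_\ell,V_\ell}(\nabla f(X_\ell))\|_\frob^2$. Letting $j\to\infty$ shows the series converges, hence $\mathcal P_{U_\ell,V_\ell}(\nabla f(X_\ell))\to 0$, and then $\grad f(X_\ell)\to 0$ by the second inequality in \eqref{eq: general estimate subspace}. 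For the rate, at least one index $0\le\ell\le j$ satisfies $\tfrac{\vartheta}{2\Lip}\|\mathcal P_{U_\ell,V_\ell}(\nabla f(X_\ell))\|_\frob^2 \le (f(X_0)-f_*)/(j+1)$; together with the termwise bound $\|\grad f(X_\ell)\|_\frob \le \|\mathcal P_{U_\ell,V_\ell}(\nabla f(X_\ell))\|_\frob$ this gives the displayed estimate on the running minimum, and choosing $j$ so that the right-hand side drops below $\varepsilon$ gives the iteration-complexity bound.

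It remains to analyze an accumulation point $X_*$ (no existence is claimed, since the hypotheses do not force the iterates to be bounded). Along a subsequence $X_{\ell_i}\to X_*$ continuity of $f$ gives $f(X_*) = f_*$. To obtain $\grad f(X_*) = 0$ I would exploit compactness of the Stiefel manifolds: after passing to a further subsequence, $U_{\ell_i}\to U_*$, $V_{\ell_i}\to V_*$ with orthonormal columns and $S_{\ell_i}\to S_*$, so $X_* = U_*S_*V_*^\top$ and $k'\coloneqq\rank(X_*)\le k$. Because the projector formula for $\mathcal P_{U,V}$ is jointly continuous in $(U,V)$ and its argument and $\nabla f$ is continuous, $\mathcal P_{U_{\ell_i},V_{\ell_i}}(\nabla f(X_{\ell_i}))\to\mathcal P_{U_*,V_*}(\nabla f(X_*))$, while the left side tends to $0$; hence $\mathcal P_{U_*,V_*}(\nabla f(X_*)) = 0$. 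Finally, as recorded in the discussion preceding Theorem~\ref{thm: subspace}, the tangent space $W$ to $\M_{k'}$ at $X_*$ is contained in $T_{U_*,V_*}$, so orthogonal projection onto $W$ factors as $\mathcal P_W = \mathcal P_W\circ\mathcal P_{U_*,V_*}$; therefore $\grad f(X_*) = \mathcal P_W(\mathcal P_{U_*,V_*}(\nabla f(X_*))) = 0$, i.e.\ $X_*$ is a critical point of $f$ on $\M_{k'}$.

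The telescoping and rate portions are entirely routine; the one genuinely non-mechanical point is the last paragraph. The difficulty is that $X\mapsto\grad f(X)$ is discontinuous at rank-deficient matrices, so one cannot directly pass to the limit in $\grad f(X_{\ell_i})\to 0$ (this is precisely the mechanism behind the ``apocalypse''). The remedy is to argue through the jointly continuous projections $\mathcal P_{U_\ell,V_\ell}$ onto the fixed-size subspaces $T_{U_\ell,V_\ell}$ — which, crucially, contain the (possibly lower-dimensional) tangent space at the limit point — rather than through $\grad f$ itself.
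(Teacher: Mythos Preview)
Your proposal is correct and follows the same overall structure the paper intends: the paper omits the proof, stating it is analogous to that of Corollary~\ref{cor: balanced}, and your telescoping and rate arguments are exactly that analogue.

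The one step the paper flags as needing care is the conclusion $\grad f(X_*)=0$ at an accumulation point, since $X\mapsto\grad f(X)$ is discontinuous across rank strata. Here you and the paper differ slightly. The paper invokes lower semicontinuity of $X\mapsto\|\grad f(X)\|_\frob$ (referring to~\cite{Hosseini2019,OlikierAbsil2022}) to pass directly from $\|\grad f(X_\ell)\|_\frob\to 0$ to $\grad f(X_*)=0$. You instead use compactness of the Stiefel manifold to extract limits $U_*,V_*$ of the orthonormal factors, exploit the joint continuity of the explicit projector formula to get $\mathcal P_{U_*,V_*}(\nabla f(X_*))=0$, and then project down via $T_{X_*}\mathcal M_{k'}\subseteq T_{U_*,V_*}$. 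Your route is more self-contained---it effectively reproves the needed semicontinuity in this concrete setting without appealing to outside results---while the paper's route is terser once that semicontinuity is granted.
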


Note that $0 < \vartheta_\ell \le \frac{1}{2}$ in all cases of Theorem~\ref{thm: subspace} and hence $0 < \vartheta \le \frac{1}{2}$ in Corollary~\ref{cor: subspace}.

Based on the considerations above, the proofs of the theorem and the corollary are analogous to the proofs of Theorem~\ref{thm: balanced} and Corollary~\ref{cor: subspace} and are therefore omitted. There is one important detail to note. The function $X \mapsto \| \grad f(X) \|_\frob$ is not continuous but lower semicontinuous, which basically follows from the fact that the rank in a limit of matrices can only drop, and the tangent space at the limit is contained in the limit of tangent spaces. As a result, $X_\ell \to X_*$ and $\| \grad f(X_\ell) \|_\frob \to 0$ implies $\grad f(X_*) = 0$. For more discussion on limiting behavior of tangent and normal cones for low-rank matrices, see~\cite{Hosseini2019,OlikierAbsil2022}.

\subsection{Comparison of convergence guarantees}\label{sec: comparison}

We now compare the obtained convergence statements from Theorems~\ref{thm: balanced} and~\ref{thm: subspace}. For this, we first note how the norms of the Riemannian gradient and the gradient of $g(L, R) \coloneqq f(LR^\top)$ can be compared. In a balanced representation $X = L R^\top = (U \Sigma^{1/2})(V \Sigma^{1/2})^\top$ we have, by~\eqref{eq: partial gradients of g}, that
\begin{equation}\label{eq: estimate}
\begin{aligned}
\| \nabla g(L,R) \|_\frob^2 &= \| \nabla f(X) V \Sigma^{1/2} \|_\frob^2  + \| \Sigma^{1/2} U^\top \nabla f(X) \|_\frob^2\\ &\ge \sigma_{k}(X) ( \| \nabla f(X) V \|_\frob^2  + \| U^\top \nabla f(X) \|_\frob^2 ) \ge \sigma_{k}(X) \| \grad f (X) \|_\frob^2,
\end{aligned}
\end{equation}
where $\sigma_k(X)$ is the $k$th singular value of $X$. We also have a reverse estimate:
\begin{equation}\label{eq: reverse estimate}
\| \nabla g(L,R) \|_\frob^2 = \| \nabla f(X) V \Sigma^{1/2} \|_\frob^2  + \| \Sigma^{1/2} U^\top \nabla f(X) \|_\frob^2 \le 2 \| X \|_2 \| \grad f(X) \|_\frob^2.
\end{equation}

Let us take the Riemannian gradient as a reference for the comparison. For achieving a target accuracy
\[
\| \grad f(X_\ell) \|_\frob \le \varepsilon,
\]
Corollary~\ref{cor: subspace} asserts that Algorithm~\ref{algo: Subspace GD} needs at most
\[
\ell_\varepsilon \coloneqq \lceil 2\Lip (f(X_0) - f_*) \vartheta^{-1} \varepsilon^{-2} - 1 \rceil
\]
iterations. By~\eqref{eq: reverse estimate}, it implies \begin{equation*}\label{eq: implied balanced accuracy}
\| \nabla g (L_{\ell_\varepsilon},R_{\ell_\varepsilon}) \|_\frob \le \sqrt{2 \| X_{\ell_\varepsilon} \|_2} \varepsilon \le \sqrt{2 \rho} \cdot \varepsilon,
\end{equation*}
where $\rho$ is an upper bound for the spectral norm $\| X \|_2$ in the sublevel set $N_0 \coloneqq \{ X \in \mathcal{M}_{\le k} \colon f(X) \le f(L_0^{} R_0^\top) \}$. Note that under comparable assumptions (assuming the same lower bound $\vartheta \coloneqq \inf_\ell \vartheta_\ell$ and using the same $\rho$), Corollary~\ref{cor: balanced} guarantees after the same number $\ell_{\varepsilon}$ of iterations of Algorithm~\ref{algo: balanced GD} only a slightly better accuracy
\begin{equation}\label{eq: balanced accuracy}
\| \nabla g (L_\ell,R_\ell) \|_\frob \le \sqrt{\rho} \cdot \varepsilon.
\end{equation}
In this sense, Algorithm~\ref{algo: Subspace GD} automatically ensures almost the same convergence guarantee for the balanced gradient $\nabla g(L,R)$ as when using Algorithm~\ref{algo: balanced GD}.

The converse is not true, at least not from the estimates. Suppose the balanced version in Algorithm~\ref{algo: balanced GD} is used. After at most $\ell_\varepsilon$ steps it reaches the accuracy~\eqref{eq: balanced accuracy}. By~\eqref{eq: estimate}, it only implies
\[
\| \grad f(X_{\ell_\varepsilon}) \|_\frob \le \sqrt{\frac{\rho}{\sigma_{k}(X_{\ell_\varepsilon})}} \cdot \varepsilon.
\]
This estimate is not robust to the occurrence of small singular values during the iteration. In fact, when starting close enough to a potential limit point $X_*$ (of rank $k$) one obtains a prefactor roughly of the size $\sqrt{\frac{\| X_* \|_2}{ \sigma_{k}(X_*)}} = \sqrt{\frac{\sigma_1(X_*)}{\sigma_{k}(X_*)}}$, so it depends on the (relative) condition number of $X_*$. Note again that Algorithm~\ref{algo: Subspace GD} would ensure $\| \grad f(X_\ell) \|_\frob \le \varepsilon$ without any condition on singular values.

In the sense of this comparison we may conclude that the Riemannian gradient $\grad f(X_\ell)$ is a better target quantity, since it provides estimates for $\nabla g(L_\ell,R_\ell)$ independent of small singular values, which themselves are related to the local curvature of the rank-$k$ manifold; see,~e.g.,~\cite[Section~4]{FepponLermusiaux2018}. Consequently, the embedded subspace version of Algorithm~\ref{algo: Subspace GD} is a better and more stable choice for selecting alternating steepest descent directions with the Gauss--Southwell rule, since it directly aims at reducing the Riemannian gradient. The numerical experiments presented in section~\ref{sec: numerical experiments} confirm this intuition.

\subsection{Linear convergence rate}\label{sec: Linear convergence rate}

The algebraic convergence rates for the gradients obtained in Theorems~\ref{thm: balanced} and~\ref{thm: subspace} are deduced under minimal assumptions but are pessimistically slow. Instead, in practice one often observes a linear convergence rate. This is natural since the Gauss--Southwell approach leads to search directions that remain uniformly gradient-related. Indeed, in the factorized version of Algorithm~\ref{algo: balanced GD} the search directions $G_{1/2}$ satisfy the angle condition
\[
\langle G_{1/2} , \nabla g(L,R) \rangle_\frob \ge \frac{1}{\sqrt{2}} \| G_{1/2} \|_\frob \| \nabla g(L,R) \|_\frob
\]
(here $G_{1/2}$ technically is of the form $(G_1,0)$ or $(0,G_2)$) as well as the scaling condition
\[
 \frac{1}{\sqrt{2}} \| \nabla g(L,R) \|_\frob \le \| G_{1/2} \|_\frob \le \| \nabla g(L,R) \|_\frob.
\]
For the embedded Riemannian version in Algorithm~\ref{algo: Subspace GD} we have the analogous properties
\[
\langle P_{1/2} , \grad f(X) \rangle_\frob \ge \frac{1}{\sqrt{2}} \| P_{1/2} \|_\frob \| \grad f(X) \|_\frob
\]
and
\[
 \frac{1}{\sqrt{2}} \| \grad f(X) \|_\frob \le \| P_{1/2} \|_\frob \le \| \grad f(X) \|_\frob.
\]

Provided the cost function is sufficiently smooth, a descent method with such strongly gradient-related search directions can be expected to be locally linearly convergent to critical points with positive-definite Hessian. However, it is not that straightforward in our setting. For the factorized version one is faced with the problem that the cost function $g(L, R) \coloneqq f(LR^\top)$ does not have isolated critical points due to the non-uniqueness of the factorization $LR^\top$. Hence the Hessian will not be positive-definite. In our version of Algorithm~\ref{algo: balanced GD} this non-uniqueness is removed by balancing. While a local convergence analysis for Algorithm~\ref{algo: balanced GD} could likely be carried out in the corresponding quotient structure of balanced representations, we omit it here and will focus on Algorithm~\ref{algo: Subspace GD} instead.

A framework for pointwise convergence of gradient-related descent methods on conic varieties such as $\mathcal M_{\le k}$ has been developed in~\cite{Schneider2015} assuming a \L{}ojasiewicz-type inequality for the projected gradient onto tangent cones. It notably features a result for a variant of Algorithm~\ref{algo: Subspace GD} that also handles rank-deficient iterates~\cite[Algorithm~4 \& Theorem~3.10]{Schneider2015}. The required inequality for a critical point $X_* \in \mathcal M_{\le k}$ with full rank $k$ would read
\[
\abs{f(X) - f(X_*)}^{1-\theta} \le C \| \grad f(X) \|_\frob
\]
for all $X \in \mathcal M_{\le k}$ in a neighborhood of $X_*$, where $\theta \in (0,1/2]$ and $C > 0$ are constants. It is also noted in~\cite{Schneider2015} that $\theta = 1/2$ implies an asymptotic linear convergence rate, which is satisfied at smooth points when the Riemannian Hessian of $f$ at $X_*$ is positive-definite (this is proved in the lemma below; for the concept of the Riemannian Hessian we refer to~\cite[Chapter~5]{Absil2008}). Since the algorithm considered in~\cite{Schneider2015} slightly differs from Algorithm~\ref{algo: Subspace GD} and the provided convergence proof via the \L{}ojasiewicz inequality is somewhat very indirect, we present here a more direct local convergence analysis for the case of a positive-definite Riemannian Hessian. Since we already have sufficient-decrease properties as stated in Theorem~\ref{thm: subspace}---thanks to our restriction to $\Lip$-smooth cost functions---the local analysis turns out to be much easier than in the general case.

The following lemma restates the well-known fact that a \L{}ojasiewicz-type inequality with $\theta = 1/2$ holds at critical points with positive-definite (Riemannian) Hessian. 

\begin{lemma}\label{lemma: Lojasiewisc}
Let $\mathcal M$ be a smooth $d$-dimensional embedded Riemannian manifold in $\R^N$, $f \colon \mathcal M \to \R$ a twice continuously differentiable function, and $x_* \in \mathcal M$ a critical point of $f$ (i.e., $\grad f(x_*) = 0$) at which the Riemannian Hessian $H(x_*)$ is positive-definite (on $T_{x_*} \mathcal M$). Let $\lambda_{\min}^* > 0$ be its smallest eigenvalue. Then for any $\varepsilon > 0$ there exists a neighborhood $\mathcal N \subseteq \mathcal M$ of $x_*$ such that
\begin{equation*}\label{eq: gradient inequality}
0 < f(x) - f(x_*) \le \left(\frac{1}{2 \lambda_{\min}^*} + \varepsilon \right) \| \grad f(x) \|^2
\end{equation*}
for all $x \in \mathcal N$ with $x \neq x_*$.
\end{lemma}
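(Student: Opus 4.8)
The plan is to reduce the statement to a purely Euclidean second-order Taylor estimate by passing to the exponential chart at $x_*$. Fix a linear isometry $T_{x_*}\M \cong \R^d$ and let $\varphi \coloneqq \exp_{x_*}$, restricted to a small open ball $B \subseteq \R^d$ around $0$ on which it is a diffeomorphism onto an open neighborhood of $x_*$ in $\M$; since $\M$ is smooth, $\varphi$ is $C^2$ (indeed smooth), $\varphi(0) = x_*$, and $D\varphi(0) = \mathrm{id}$. Put $\tilde f \coloneqq f \circ \varphi$, which is $C^2$ on $B$. Because $\grad f(x_*) = 0$, the chain rule gives $\nabla \tilde f(0) = D\varphi(0)^\top \grad f(x_*) = 0$. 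Moreover, the curves $t \mapsto \varphi(t\xi)$ are geodesics through $x_*$, so $\frac{d^2}{dt^2}\big|_{t=0} \tilde f(t\xi) = \langle H(x_*)\xi,\xi\rangle$ by the definition of the Riemannian Hessian; by polarization this means the Euclidean Hessian $A \coloneqq \nabla^2 \tilde f(0)$ equals $H(x_*)$ in the chosen orthonormal basis. In particular $A$ is symmetric positive definite with smallest eigenvalue $\lambda_{\min}^*$. (Equivalently: for a general $C^2$ chart the Hessian would be chart-dependent, but the correction terms multiply $\grad f(x_*) = 0$ and drop out.)

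Next I would write down the two Taylor expansions at $0$, valid since $\tilde f \in C^2$: $\tilde f(v) - \tilde f(0) = \tfrac12 \langle Av,v\rangle + o(\|v\|^2)$ and $\nabla\tilde f(v) = Av + o(\|v\|)$, the latter yielding $\|\nabla\tilde f(v)\|^2 = \langle A^2 v,v\rangle + o(\|v\|^2)$. The algebraic heart of the argument is the matrix inequality $A \preceq \tfrac{1}{\lambda_{\min}^*} A^2$, which is immediate after diagonalizing $A$ since $\lambda \le \lambda^2/\lambda_{\min}^*$ for every eigenvalue $\lambda \ge \lambda_{\min}^* > 0$. Hence $\tfrac12\langle Av,v\rangle \le \tfrac{1}{2\lambda_{\min}^*}\langle A^2 v,v\rangle$, and combining the two expansions gives $\tilde f(v) - \tilde f(0) \le \tfrac{1}{2\lambda_{\min}^*}\|\nabla\tilde f(v)\|^2 + o(\|v\|^2)$. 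Strict positivity $\tilde f(v) - \tilde f(0) > 0$ for $v \neq 0$ small follows from the opposite-direction bound $\langle Av,v\rangle \ge \lambda_{\min}^*\|v\|^2$, which dominates the remainder once $B$ is shrunk.

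It then remains to pass back from $(\tilde f,\nabla\tilde f)$ to $(f,\grad f)$ and to absorb the $o(\|v\|^2)$ terms into $\varepsilon$. For the translation, let $G(v) \coloneqq D\varphi(v)^\top D\varphi(v)$ be the Gram matrix of the pullback metric, so $G(0) = I$ and $G$ is continuous. Since $\grad f(\varphi(v)) \in T_{\varphi(v)}\M = \mathrm{im}\, D\varphi(v)$ and $\nabla\tilde f(v) = D\varphi(v)^\top \grad f(\varphi(v))$, one checks the identity $\|\grad f(\varphi(v))\|^2 = \nabla\tilde f(v)^\top G(v)^{-1}\nabla\tilde f(v)$; because $G(v)^{-1} = I + o(1)$ and $\nabla\tilde f(v) = O(\|v\|)$, this equals $\|\nabla\tilde f(v)\|^2 + o(\|v\|^2)$. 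Plugging this into the bound from the previous paragraph gives $f(\varphi(v)) - f(x_*) \le \tfrac{1}{2\lambda_{\min}^*}\|\grad f(\varphi(v))\|^2 + o(\|v\|^2)$. Finally, the lower bound $\|\grad f(\varphi(v))\|^2 = \langle A^2 v,v\rangle + o(\|v\|^2) \ge \tfrac12 (\lambda_{\min}^*)^2 \|v\|^2$ for $\|v\|$ small lets us bound every $o(\|v\|^2)$ quantity by $\varepsilon\,\|\grad f(\varphi(v))\|^2$ after shrinking $B$ a last time depending on $\varepsilon$; taking $\mathcal N \coloneqq \varphi(B)$ for that ball completes the proof, since every $x \in \mathcal N$, $x \neq x_*$, is $\varphi(v)$ for a unique small $v \neq 0$.

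I expect the only genuinely non-routine point to be the bookkeeping in the last paragraph: verifying that the discrepancy between the Euclidean norm of $\nabla\tilde f(v)$ and the Riemannian norm of $\grad f(\varphi(v))$ contributes only at order $o(\|v\|^2)$ — which works precisely because $\grad f$ is $O(\|v\|)$ near the critical point, so the first-order metric distortion multiplies an already small quantity — together with checking that the chart-dependence of the Hessian is harmless because $\grad f(x_*) = 0$. Everything else is a standard Taylor argument plus an elementary eigenvalue inequality.
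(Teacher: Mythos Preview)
Your proof is correct and follows essentially the same approach as the paper: pass to a chart at $x_*$ with orthonormal differential at the origin (you use the exponential map, the paper uses an arbitrary chart linearly normalized so that $\psi'(0)$ is orthogonal), Taylor-expand both $\tilde f$ and $\nabla\tilde f$, and combine via an eigenvalue inequality for the Hessian. The only cosmetic difference is that the paper substitutes $y = \hat H^{-1}\nabla\hat f(y) + o(\|y\|)$ into the quadratic term and then bounds $\hat H^{-1}\preceq \tfrac{1}{\lambda_{\min}^*}I$, whereas you keep $v$ and use the equivalent inequality $A\preceq \tfrac{1}{\lambda_{\min}^*}A^2$ directly; the transfer from $\|\nabla\tilde f\|$ to $\|\grad f\|$ via the Gram matrix $G(v)$ is likewise just a more explicit version of the paper's one-line bound $\|\nabla\hat f(y)\|\le(1+\varepsilon'')\|\grad f(x)\|$.
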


\begin{proof}
We follow the reasoning in~\cite[Theorem~4.5.6]{Absil2008} based on local charts. Choose a local chart $\psi \colon \R^d \to \mathcal M$ for $x_*$ such that $\psi$ is a diffeomorphism between a neighborhood of zero in $\R^d$ and a neighborhood of $x_*$ in $\mathcal M$, that is, $x_* = \psi(0)$ and $f(x) = \hat f(y)$ in these neighborhoods where $\hat f \coloneqq f \circ \psi$. The function $\hat f$ satisfies $\nabla \hat f(0) = [\psi'(0)]^\top \cdot \grad f(x_*) = 0$. By replacing the diffeomorphism $\psi(y)$ with $\psi(Ty)$ if necessary, where $T$ is a suitable invertible linear transformation, we can assume that $\psi'(0) \colon \R^d \to T_{x_*} \mathcal M$ is an orthogonal map, that is, $[\psi'(0)]^\top  = [\psi'(0)]^{-1}$. Then the Hessian of $\hat f$ at zero,
\[
\hat H \coloneqq \nabla^2 \hat f(0) = [\psi'(0)]^\top \cdot  H(x_*) \cdot  [\psi'(0)],
\]
has the same eigenvalues as the Riemannian Hessian $H(x_*)$. In particular it is positive-definite so that zero is an isolated local minimizer of $\hat f$. Therefore $f(x) - f(x_*) = \hat f(y) - \hat f(0) > 0$ for $y$ close enough (but not equal) to zero. By Taylor expansion it holds that
\begin{equation}\label{eq: Taylor 2nd order}
\hat f(y) - \hat f(0) = \frac{1}{2} y^\top \hat H y + o(\| y \|^2),
\end{equation}
as well as $\nabla \hat f(y) = \hat H y + o(\| y \|)$, that is,
\begin{equation}\label{eq: y as grad fy}
y = {\hat H}^{-1} \nabla \hat f(y) + o(\|y\|).
\end{equation}
The relation~\eqref{eq: y as grad fy} in particular implies that for given $0 < \varepsilon' < 1$ and $\| y \|$ small enough one has
\[
(1 - \varepsilon') \| y \| \le  \|{\hat H}^{-1} \nabla \hat f(y) \|  \le (1 + \varepsilon') \|y \|.
\]
Using this when inserting~\eqref{eq: y as grad fy} into~\eqref{eq: Taylor 2nd order} allows to conclude that
\[
\hat f(y) - \hat f(0) 
= \frac{1}{2}\nabla \hat f(y)^\top \hat H^{-1} \nabla \hat f(y) + o( \| \nabla \hat f(y) \|^2) \le \frac{1}{2\lambda_{\min}^*} \| \nabla \hat f(y) \|^2 + o( \| \nabla \hat f(y) \|^2).
\]
Since $\psi'(0)$ is orthogonal, we have $\| \nabla \hat f(y) \| = \| [\psi'(y)]^\top \grad f(x) \| \le (1 + \varepsilon'') \| \grad f(x) \|$ for any given $\varepsilon'' > 0$ provided $\| y \|$ is small enough. This implies the assertion.
\end{proof}

We now state a linear convergence result for Algorithm~\ref{algo: Subspace GD} for $\Lip$-smooth functions under the assumption that one of the accumulation points has full rank $k$ and a positive-definite Riemannian Hessian with respect to the smooth embedded submanifold
\[
\mathcal M_k \coloneqq \{ X \in \R^{m \times n} \colon \rank (X) = k \}.
\]
The result applies to any combination of step-size rules considered in Theorem~\ref{thm: subspace}, but requires additional assumptions for fixed step sizes (as before) and for exact step sizes in order to ensure suitable lower (in case of the former) and upper (for the latter) bounds on the step sizes.

\begin{theorem}\label{thm: linear rate}
Assume that $f \colon \R^{m \times n} \to \R$ is $\Lip$-smooth and bounded from below. Let $(X_\ell)$ be the iterates of Algorithm~\ref{algo: Subspace GD} using any combination of the step-size rules considered in Theorem~\ref{thm: subspace}. If step sizes $h_\ell = \frac{2 \alpha_\ell}{\Lip}$ are used for infinitely many iterates, assume (for this subsequence) $\inf_\ell \alpha_\ell > 0$ and $\sup_\ell \alpha_\ell < 1$, implying $\vartheta \coloneqq \inf_\ell \vartheta_\ell > 0$ for the $\vartheta_\ell$ defined in Theorem~\ref{thm: subspace}. If exact line search is used infinitely often, assume in addition that the constrained sublevel set $N_0 \coloneqq \{X \in \mathcal{M}_{\le k} : f(X) \le f(X_0) \}$ is bounded.

Suppose that $(X_\ell)$ has an accumulation point $X_*$ of rank $k$ such that $f$ is twice continuously differentiable in a neighborhood of $X_*$ and the Riemannian Hessian $H(X_*)$ of $f \vert_{\mathcal M_k}$ at $X_*$ is positive-definite. Then $X_*$ is the unique limit point of $(X_\ell)$, an isolated local minimizer of $f \vert_{\mathcal M_k}$, and the convergence is R-linear with a rate
\[
\limsup_{\ell \to \infty} \| X_\ell - X_* \|^{1/\ell} \le \left( 1 - \vartheta \frac{\lambda_{\min}^*}{\Lip} \right)^{1/2} < 1,
\]
where $\lambda_{\min}^*$ is the smallest eigenvalue of $H(X_*)$. For the function values it holds that
\[
\limsup_{\ell \to \infty} \, [ f(X_\ell) - f(X_*) ]^{1/\ell} \le 1 - \vartheta \frac{\lambda_{\min}^*}{\Lip}.
\]
\end{theorem}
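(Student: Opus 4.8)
The plan is to combine the sufficient-decrease estimate from Theorem~\ref{thm: subspace} with the local \L ojasiewicz-type inequality from Lemma~\ref{lemma: Lojasiewisc} to first show that convergence of function values is linear, and then upgrade this to R-linear convergence of the iterates themselves via a standard telescoping/geometric-series argument controlling step lengths. The first obstacle to address is that Lemma~\ref{lemma: Lojasiewisc} is a \emph{local} statement near the rank-$k$ accumulation point $X_*$, whereas a~priori only a subsequence of $(X_\ell)$ enters any given neighborhood $\mathcal N$ of $X_*$. I would handle this exactly as in~\cite[Theorem~4.5.6]{Absil2008}: pick $\mathcal N$ small enough that both Lemma~\ref{lemma: Lojasiewisc} applies and $f\vert_{\mathcal M_k}$ has no other critical point in $\mathcal N$; then use the fact that $f(X_\ell) \downarrow f_* = f(X_*)$ (by Corollary~\ref{cor: subspace}, together with continuity of $f$ and the accumulation-point property, which forces $f_* = f(X_*)$), so that once an iterate $X_{\ell_0}$ lies deep inside $\mathcal N$, the per-step decrease is so small that the iterate cannot escape a slightly larger neighborhood before the geometric argument takes over. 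Here it is essential that rank is lower semicontinuous and $X_*$ has rank exactly $k$, so $X_\ell$ has rank $k$ for $\ell$ large and all iterates near $X_*$ genuinely live on $\mathcal M_k$; the QR steps of Algorithm~\ref{algo: Subspace GD} keep the iterate rank-$k$ as long as the rank does not numerically collapse, which it will not near the rank-$k$ point $X_*$.

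Once we are confined to $\mathcal N$, the core estimate is immediate. Write $e_\ell \coloneqq f(X_\ell) - f(X_*) > 0$. Theorem~\ref{thm: subspace} gives $e_\ell - e_{\ell+1} = f(X_\ell) - f(X_{\ell+1}) \ge \frac{\vartheta}{2\Lip}\,\|\grad f(X_\ell)\|_\frob^2$, and Lemma~\ref{lemma: Lojasiewisc} (with $\varepsilon$ chosen arbitrarily small, absorbed into a constant) gives $\|\grad f(X_\ell)\|_\frob^2 \ge 2\lambda_{\min}^*\, e_\ell$ up to a factor $(1+o(1))$ as $X_\ell \to X_*$. Combining, $e_{\ell+1} \le \bigl(1 - \vartheta\frac{\lambda_{\min}^*}{\Lip}(1+o(1))\bigr) e_\ell$, which yields $\limsup_{\ell\to\infty} e_\ell^{1/\ell} \le 1 - \vartheta\frac{\lambda_{\min}^*}{\Lip}$ once one checks this contraction factor is genuinely in $(0,1)$: the lower bound $0<\vartheta\le\tfrac12$ from Theorem~\ref{thm: subspace} and the bound $\lambda_{\min}^* \le \Lip$ (the Riemannian Hessian at a smooth point of $f\vert_{\mathcal M_k}$ is a compression of a Euclidean-Hessian-type operator bounded by $\Lip$, since $f$ is $\Lip$-smooth) guarantee $\vartheta\frac{\lambda_{\min}^*}{\Lip} \in (0,\tfrac12]$. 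This also proves $X_*$ is an isolated local minimizer, directly from positive-definiteness of $H(X_*)$, and that $e_\ell \to 0$ geometrically.

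Finally, to pass from function values to iterates I would bound the step lengths. For each step, $\|X_{\ell+1} - X_\ell\|_\frob = h_\ell\|P_{1/2}\|_\frob$, and the Armijo/fixed/exact rules all give a \emph{lower} bound $f(X_\ell) - f(X_{\ell+1}) \ge c\,h_\ell\|P_{1/2}\|_\frob^2$ while the $\Lip$-smooth descent estimate gives an \emph{upper} bound on $h_\ell$ away from $0$ (this is where the extra hypotheses enter: $\sup_\ell\alpha_\ell<1$ bounds fixed steps, boundedness of $N_0$ with $\Lip$-smoothness bounds the exact-line-search step, and backtracking steps are bounded by $\bar h$). Hence $\|X_{\ell+1}-X_\ell\|_\frob \le C\bigl(f(X_\ell)-f(X_{\ell+1})\bigr)^{1/2} \le C\, e_\ell^{1/2}$, which is summable with geometric decay rate $\bigl(1 - \vartheta\frac{\lambda_{\min}^*}{\Lip}\bigr)^{1/2}$. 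Therefore $(X_\ell)$ is Cauchy and converges to a limit in $\mathcal N$; since $X_*$ is the unique critical point of $f\vert_{\mathcal M_k}$ there and $\grad f$ vanishes at the limit (Corollary~\ref{cor: subspace} plus lower semicontinuity of $\|\grad f(\cdot)\|$), the limit must be $X_*$. Summing the tail geometric series gives $\|X_\ell - X_*\|_\frob \le C'\, e_\ell^{1/2}\bigl(1-\vartheta\frac{\lambda_{\min}^*}{\Lip}\bigr)^{?}$, so $\limsup_\ell \|X_\ell-X_*\|_\frob^{1/\ell} \le \bigl(1-\vartheta\frac{\lambda_{\min}^*}{\Lip}\bigr)^{1/2}$, as claimed. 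I expect the main obstacle to be the bookkeeping in the first paragraph --- rigorously trapping the whole tail of the sequence in $\mathcal N$ starting from a single iterate that happens to be close --- since everything after that is the familiar descent-plus-\L ojasiewicz machinery; the step-length lower bounds are routine given the hypotheses but must be stated carefully case by case.
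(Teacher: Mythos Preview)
Your proposal is correct and follows the same core strategy as the paper: combine the sufficient-decrease estimate of Theorem~\ref{thm: subspace} with the local \L{}ojasiewicz inequality of Lemma~\ref{lemma: Lojasiewisc} to obtain Q-linear contraction of $e_\ell \coloneqq f(X_\ell)-f(X_*)$, then upgrade to R-linear convergence of the iterates.

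The technical handling differs in two places, and the paper's version is slightly cleaner. For the trapping, the paper does \emph{not} interleave the contraction and step-length estimates inductively as you anticipate. Instead it first establishes $\|X_{\ell+1}-X_\ell\|_\frob = h_\ell\|P_{1/2}\|_\frob \le h_\ell\|\grad f(X_\ell)\|_\frob \to 0$ \emph{globally}, using only that $h_\ell$ is bounded (this is where the hypotheses on $\alpha_\ell$ and on $N_0$ enter) and that $\grad f(X_\ell)\to 0$ from Corollary~\ref{cor: subspace}. With this in hand, once a single iterate lands in a small chart-ball $\{\|y\|_{\hat H}<r'_\delta\}$, the next one is automatically in the larger ball $\{\|y\|_{\hat H}<r_\delta\}$, and contraction of the chart-Hessian norm $\|y_\ell\|_{\hat H}$ (which is equivalent to $e_\ell^{1/2}$ via a two-sided Taylor estimate) then traps the tail by a clean one-step induction. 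This sidesteps the circularity you flag as the main obstacle. For the R-linear rate on iterates, the paper reads it off directly from $\|X_\ell-X_*\|_\frob=\|\psi(y_\ell)-\psi(0)\|_\frob\le C\|y_\ell\|$ via local Lipschitz continuity of the chart, rather than summing a geometric tail of step lengths; both routes yield the same exponent.

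One small slip in your step-length argument: the inequality $f(X_\ell)-f(X_{\ell+1})\ge c\,h_\ell\|P_{1/2}\|_\frob^2$ with a uniform $c>0$ is not immediate for exact line search (the exact step $h_\ell$ need not satisfy the Armijo-type relation). The simpler fix, which suffices for both your summation argument and the paper's, is to use only the bound $\|X_{\ell+1}-X_\ell\|_\frob\le h_{\max}\|\grad f(X_\ell)\|_\frob$ together with $\|\grad f(X_\ell)\|_\frob^2\le \tfrac{2\Lip}{\vartheta}(e_\ell-e_{\ell+1})$ from Theorem~\ref{thm: subspace}.
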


Before giving the proof, we note that the asserted inequality $\lambda^*_{\min} \le \lambda$ holds for all Riemannian manifolds that are invariant under multiplication by positive real numbers, because the restriction of the Riemannian Hessian to one-dimensional subspaces corresponding to rays within the manifold just equals the restriction of the Euclidean Hessian to that subspace. Hence $\lambda^*_{\min}$ is bounded by the largest eigenvalue of the Euclidean Hessian, which in turn is bounded by the Lipschitz constant $\Lip$ of the gradient.

In fact, since in the Gauss--Southwell rule we only deal with linear subspaces within the tangent spaces of the low-rank manifold, a somewhat similar logic allows to obtain a refinement of Theorem~\ref{thm: linear rate}, which applies when only Armijo backtracking or exact line search are used infinitely often. The proofs of both theorems will be given subsequently.

\begin{theorem}\label{thm: linear rate 2}
Under the conditions of Theorem~\ref{thm: linear rate} assume that, except for finitely many steps, exact line search or Armijo backtracking is used. Let $\nabla^2 f(X_*)$ be the Euclidean Hessian of $f$ at $X_*$. Then
\[
\limsup_{\ell \to \infty} \| X_\ell - X_* \|^{1/\ell} \le \left( 1 - \vartheta^* \frac{\lambda_{\min}^*}{\lambda^*} \right)^{1/2} < 1,
\]
where $\lambda^* \le \lambda$ is the maximum between the operator norms (induced by the Frobenius norm) of the two projected Hessians $\mathcal P_{V_*} \nabla^2 f(X_*) \mathcal P_{V_*}$ and $\mathcal P_{U_*} \nabla^2 f(X_*) \mathcal P_{U_*}$, and 
\[
\vartheta^* \coloneqq \min \{2 \beta \gamma (1-\gamma),\lambda^* \gamma \bar{h} \}.
\]
For the function values it holds that
\[
\limsup_{\ell \to \infty} \, [f(X_\ell) - f(X_*)]^{1/\ell} \le 1 - \vartheta^* \frac{\lambda_{\min}^*}{\lambda^*}.
\]
If only exact line search is used (up to finitely many steps), the above estimates hold with
\[
\vartheta^* = \frac{1}{2}.
\]
\end{theorem}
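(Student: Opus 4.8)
The plan is to re-run the proof of Theorem~\ref{thm: linear rate}, but with the global Lipschitz constant $\Lip$ replaced, near $X_*$, by the smaller constant $\lambda^*$ that controls the two linear subspaces in which the Gauss--Southwell steps actually move. Since we argue under the hypotheses of Theorem~\ref{thm: linear rate}, we may take as given that $X_\ell\to X_*$, that $X_*$ is an isolated local minimizer of $f|_{\M_k}$, and---because the rank is lower semicontinuous and $\rank(X_*)=k$---that $\rank(X_\ell)=k$ for all large $\ell$. Then the row and column spaces of $X_\ell$, i.e.\ the ranges of $V_\ell$ and $U_\ell$, depend continuously on $X_\ell$ near $X_*$ (the gap between $\sigma_k(X_\ell)\to\sigma_k(X_*)>0$ and $\sigma_{k+1}=0$ stays open), so $\mathcal P_{V_\ell}\to\mathcal P_{V_*}$ and $\mathcal P_{U_\ell}\to\mathcal P_{U_*}$. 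As in Remark~\ref{rem: critical points}, exact line search is well defined thanks to the assumed boundedness of $N_0$, and the finitely many steps using neither exact line search nor Armijo are irrelevant for the asymptotic rate.

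The heart of the matter is to identify the restricted smoothness constant. A $U$-step (resp.\ a $V$-step) of Algorithm~\ref{algo: Subspace GD} is a gradient step for $f$ restricted to the linear subspace $T_{V_\ell}$ (resp.\ $T_{U_\ell}$), whose Hessian at a point $X$ is the symmetric operator $\mathcal P_{V_\ell}\nabla^2 f(X)\mathcal P_{V_\ell}$ (resp.\ $\mathcal P_{U_\ell}\nabla^2 f(X)\mathcal P_{U_\ell}$). Its operator norm tends, as $X\to X_*$ and $\ell\to\infty$, to $\|\mathcal P_{V_*}\nabla^2 f(X_*)\mathcal P_{V_*}\|$ (resp.\ $\|\mathcal P_{U_*}\nabla^2 f(X_*)\mathcal P_{U_*}\|$), each of which is $\le\lambda^*$ by definition. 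So, fixing $\varepsilon>0$, there is a ball $B\ni X_*$ and an index $L$ such that for $\ell\ge L$ the relevant restriction of $f$ is $(\lambda^*+\varepsilon)$-smooth on $B$. Since the scaling condition of section~\ref{sec: Linear convergence rate} gives $\|P_{1/2}\|_\frob\le\|\grad f(X_\ell)\|_\frob$, which tends to $0$ by Corollary~\ref{cor: subspace}, and $X_\ell\to X_*$, for $\ell$ large both the Armijo segment $[X_\ell,\,X_\ell-\bar h P_{1/2}]$ and the point $X_\ell-\tfrac1{\lambda^*+\varepsilon}P_{1/2}$ lie in $B$. Then the backtracking analysis of section~\ref{sec: gradient descent methods} (via~\eqref{eq: restricted smoothness}) gives $f(X_\ell)-f(X_{\ell+1})\ge\tfrac1{\lambda^*+\varepsilon}\min\{2\beta\gamma(1-\gamma),(\lambda^*+\varepsilon)\gamma\bar h\}\,\|P_{1/2}\|_\frob^2$ on Armijo steps, while $f(X_{\ell+1})\le f\bigl(X_\ell-\tfrac1{\lambda^*+\varepsilon}P_{1/2}\bigr)$ together with~\eqref{eq: optimal decrease} gives $f(X_\ell)-f(X_{\ell+1})\ge\tfrac1{2(\lambda^*+\varepsilon)}\|P_{1/2}\|_\frob^2$ on exact-line-search steps. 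Since $\vartheta^*\le\tfrac12$ in both regimes of the theorem, for every $\eta>0$ there is $L$ with $f(X_\ell)-f(X_{\ell+1})\ge\bigl(\tfrac{\vartheta^*}{\lambda^*}-\eta\bigr)\|P_{1/2}\|_\frob^2$ for all $\ell\ge L$.

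From here the argument mirrors the proof of Theorem~\ref{thm: linear rate}. The scaling condition of section~\ref{sec: Linear convergence rate} also gives $\|P_{1/2}\|_\frob^2\ge\tfrac12\|\grad f(X_\ell)\|_\frob^2$, and Lemma~\ref{lemma: Lojasiewisc} applied at $X_*$ on $\M_k$ (with smallest Hessian eigenvalue $\lambda_{\min}^*$) gives, for $\ell$ large, $\|\grad f(X_\ell)\|_\frob^2\ge(2\lambda_{\min}^*-\eta')\,(f(X_\ell)-f(X_*))$. Writing $e_\ell\coloneqq f(X_\ell)-f(X_*)\ge0$ and chaining the three estimates, $e_{\ell+1}\le(1-c_{\eta,\eta'})\,e_\ell$ with $c_{\eta,\eta'}\to\vartheta^*\lambda_{\min}^*/\lambda^*$ as $\eta,\eta'\to0$; letting the error parameters tend to zero in the standard way gives $\limsup_\ell e_\ell^{1/\ell}\le1-\vartheta^*\lambda_{\min}^*/\lambda^*$, the function-value estimate. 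For the iterates: since $X_\ell\in\M_k$ near $X_*$ for large $\ell$ and the Riemannian Hessian there is positive-definite, a second-order Taylor expansion in a chart at $X_*$ gives $e_\ell\ge c_0\,\|X_\ell-X_*\|^2$ for some $c_0>0$, hence $\|X_\ell-X_*\|\le\sqrt{e_\ell/c_0}$ and $\limsup_\ell\|X_\ell-X_*\|^{1/\ell}\le(1-\vartheta^*\lambda_{\min}^*/\lambda^*)^{1/2}$. The bounds $0<\vartheta^*\le\tfrac12$, $\lambda^*\le\Lip$, and $\lambda_{\min}^*\le\lambda^*$---the last because a straight ray inside the flat subspace $T_{V_*}\subseteq\M_{\le k}$ has zero acceleration and $\grad f(X_*)=0$, so the restriction of the Riemannian Hessian of $\M_k$ at $X_*$ to $T_{V_*}$ coincides with the quadratic form of $\mathcal P_{V_*}\nabla^2 f(X_*)\mathcal P_{V_*}$, whose norm is $\le\lambda^*$---then give $0<\vartheta^*\lambda_{\min}^*/\lambda^*\le\tfrac12$, so the rate is strictly between $0$ and $1$. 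When only exact line search is used, one keeps $\vartheta^*=\tfrac12$ throughout.

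The main obstacle I anticipate is the second paragraph: rigorously establishing that the effective smoothness constant along the visited subspaces converges to $\lambda^*$. This combines continuity of the projectors $\mathcal P_{V_\ell},\mathcal P_{U_\ell}$ (which genuinely needs $\rank(X_*)=k$), the shrinking of the Armijo segments and of the exact-line-search comparison points onto $X_*$ (which uses $\|P_{1/2}\|_\frob\to0$), and the identification of the restricted Hessian with the doubly projected Euclidean one; the exact-line-search subcase additionally relies on the comparison $f(X_{\ell+1})\le f(X_\ell-\tfrac1{\lambda^*+\varepsilon}P_{1/2})$ to avoid having to control the location of the line-search minimizer. Everything else is a routine adaptation of the proof of Theorem~\ref{thm: linear rate}.
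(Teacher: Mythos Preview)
Your proposal is correct and follows essentially the same route as the paper: both proofs take $X_\ell\to X_*$ from Theorem~\ref{thm: linear rate}, identify the restricted smoothness constant on the line segments as the operator norm of the projected Hessians $\mathcal P_{V_\ell}\nabla^2 f(\cdot)\mathcal P_{V_\ell}$ and $\mathcal P_{U_\ell}\nabla^2 f(\cdot)\mathcal P_{U_\ell}$, pass to the limit using continuity of the projectors and $P_{1/2,\ell}\to 0$, and then feed the refined decrease estimate back into the argument of Theorem~\ref{thm: linear rate} via Lemma~\ref{lemma: Lojasiewisc}. Your treatment of the exact-line-search case (comparison with the step $X_\ell-\tfrac{1}{\lambda^*+\varepsilon}P_{1/2}$) and of the iterate rate (via $e_\ell\ge c_0\|X_\ell-X_*\|^2$ from a chart Taylor expansion) are slightly more explicit but equivalent to what the paper does, and your justification of $\lambda_{\min}^*\le\lambda^*$ via the vanishing second fundamental form along the flat subspace $T_{V_*}$ is a nice touch that the paper only alludes to in the paragraph preceding Theorem~\ref{thm: linear rate}.
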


Note that $\lambda^*$ can be bounded from above by more natural quantities such as the largest eigenvalue of $\mathcal P_{U_*,V_*} \nabla^2 f(X_*) \mathcal P_{U_*,V_*}$ (i.e.,~the restriction of $\nabla^2 f(X_*)$ to the tangent space $T_{X_*} \mathcal M_k$), the largest eigenvalue of the Riemannian Hessian of $f\vert_{\mathcal M_k}$ at $X_*$, or simply the largest eigenvalue of $\nabla^2 f(X_*)$.

\begin{proof}[Proof of Theorem~\ref{thm: linear rate}]
As in the proof of Lemma~\ref{lemma: Lojasiewisc} we consider a local chart $X = \psi(y)$ for $\mathcal M_k$ at $X_*$, that is, $\psi$ is a diffeomorphism between a neighborhood of zero in $\R^d$ and a neighborhood of $\psi(0) = X_*$ in $\mathcal M_k$. Let $\hat H$ be the Hessian of $f \circ \psi$ at zero, which is positive-definite. The expression $\| y \|_{\hat H} \coloneqq (\frac{1}{2} y^\top \hat H y)^{1/2}$ hence defines a norm on $\R^d$. We can assume that $\psi$ is a diffeomorphism on an open ball $\| y \|_{\hat H} < r$ around zero. Let $\varepsilon > 0$. By the lemma, there exists $r_\varepsilon \in (0, r]$ such that
\begin{equation}\label{eq: grad inequality}
f(\psi(y)) - f(X_*) \le \left( \frac{1}{2 \lambda_{\min}^*} + \varepsilon \right) \| \grad f(\psi(y)) \|^2_\frob \quad \text{for all $\| y \|_{\hat H} < r_\varepsilon$.}
\end{equation}
Let $\delta > 0$. Then the Taylor expansion as in~\eqref{eq: Taylor 2nd order} implies that there is $r_{\delta} \in (0, r_{\varepsilon}]$ such that
\[
(1 - \delta) \| y \|_{\hat H}^2 \le \abs{f(\psi(y)) - f(\psi(0))} \le (1 + \delta) \| y \|_{\hat H}^2 \quad \text{for all $\| y \|_{\hat H} < r_{\delta}$.}
\]

The iterates of Algorithm~\ref{algo: Subspace GD} satisfy
\begin{equation}\label{eq: distance of iterates}
\| X_{\ell+1} - X_{\ell}\|_\frob = h_\ell  \| P_{1/2} \|_\frob \le h_\ell \| \grad f(X_\ell) \|_\frob.
\end{equation}
Hence $X_{\ell+1} - X_{\ell} \to 0$ since $\grad f(X_\ell) \to 0$ by Corollary~\ref{cor: subspace} and the $h_\ell$ are bounded (for the exact line search one uses here that $N_0$ is bounded). This implies that there exist $\ell_0 \in \mathbb N$ and $r'_{\delta} \in (0, r_\delta]$ such that for all $\ell \ge \ell_0$ the property $X_{\ell} = \psi(y_\ell)$ with $\| y_\ell \|_{\hat H} < r'_{\delta}$ implies $X_{\ell + 1} = \psi(y_{\ell+1})$ with $\| y_{\ell+1} \|_{\hat H} < r_{\delta}$, and hence both inequalities
\begin{equation}\label{eq: function values 1}
f(X_\ell) - f(X_*) \le (1 + \delta) \| y_\ell \|_{\hat H}^2
\end{equation}
and
\begin{equation}\label{eq: function values 2}
f(X_{\ell+1}) - f(X_*) \ge (1 - \delta) \| y_{\ell+1} \|_{\hat H}^2
\end{equation}
are applicable, where we use $f(X_\ell) \ge f(X_{\ell+1}) \ge f(X_*)$ by construction. Since $X_*$ is an accumulation point, such $\ell$ exist. Consider one such $\ell$. By~\eqref{eq: grad inequality}, it satisfies
\begin{equation}\label{eq: grad inequality matrix}
f(X_\ell) - f(X_*) \le \left( \frac{1}{2 \lambda_{\min}^*} + \varepsilon \right) \| \grad f(X_\ell) \|^2.
\end{equation}
Combining this inequality with the main estimates~\eqref{eq: general estimate subspace} in Theorem~\ref{thm: subspace} results in
\begin{align*}\label{eq: contraction function values}
f(X_{\ell+1}) - f(X_*) &\le f(X_\ell) - f(X_*) - \frac{\vartheta_\ell}{2 \Lip} \| \grad f(X_\ell) \|^2 \notag \\
&\le \left( 1 - \frac{\vartheta_\ell}{\frac{\Lip}{\lambda_{\min}^*} + 2 \Lip \varepsilon} \right) (f(X_\ell) - f(X_*)).
\end{align*}
From~\eqref{eq: function values 1} and~\eqref{eq: function values 2} we conclude
\[
\| y_{\ell+1} \|_{\hat H}^2 \le \left( \frac{1 + \delta}{1 - \delta} \right)\left( 1 - \frac{\vartheta_\ell}{\frac{\Lip}{\lambda_{\min}^*} + 2 \Lip \varepsilon} \right) \| y_\ell \|_{\hat H}^2.
\]
Since $\vartheta \coloneqq \inf_\ell \vartheta_\ell > 0$, we can now assume $\ell_0$ is large enough and $\delta$ is small enough such that the product of both parentheses is bounded by some value $q < 1$ for all $\ell \ge \ell_0$.  For the particular $\ell$ under consideration it then follows $\| y_{\ell+1} \|_{\hat H} \le \sqrt{q}\| y _\ell\|_{\hat H} < r'_{\delta}$, so that the argument can be inductively repeated. This implies $y_\ell \to 0$ and hence $X_\ell \to X_*$. The estimates above then imply
\begin{equation}\label{eq: limsup1}
\limsup_{\ell \to \infty} \, (f(X_\ell) - f(X_*))^{1/\ell} \le 1 - \frac{\vartheta}{\frac{\Lip}{\lambda_{\min}^*} + 2 \Lip \varepsilon}
\end{equation}
and
\begin{equation}\label{eq: limsup2}
\limsup_{\ell \ge \ell_0} \| y_\ell \|^{1/\ell} \le \sqrt{\left( \frac{1 + \delta}{1 - \delta} \right)\left( 1 - \frac{\vartheta}{\frac{\Lip}{\lambda_{\min}^*} + 2 \Lip \varepsilon} \right)}.
\end{equation}
Finally, note that
\begin{align*}
\limsup_{\ell \to \infty} \| X_\ell - X_* \|_\frob^{1/\ell} &= \limsup_{\ell \ge \ell_0} \| \psi(y_\ell) - \psi(0)  \|_\frob^{1/\ell} \le \limsup_{\ell \to \infty} \| y_\ell  \|_\frob^{1/\ell}
\end{align*}
since $\psi$ is locally Lipschitz continuous. As $\delta$ and $\varepsilon$ can be taken arbitrarily small, this proves the theorem.
\end{proof}

\begin{proof}[Proof of Theorem~\ref{thm: linear rate 2}]
We have already shown that $X_\ell$ converges to $X_*$. Moreover, $\grad f(X_\ell) \to 0$ by Theorem~\ref{thm: subspace}. We can now refer to the remark at the end of section~\ref{sec: gradient descent methods}: assuming $X_{\ell+1}$ is obtained from $X_\ell$ using the Armijo backtracking, then in the estimate in~\eqref{eq: general estimate subspace} we can replace $\Lip$ with a restricted smoothness constant $\lambda_\ell$ of the projected gradient on the line segment connecting $X_\ell$ and $X_\ell - \bar{h} P_{1/2,\ell}$ as in~\eqref{eq: restricted smoothness}. For concreteness, assume that $P_{1,\ell}$ is used. Let $\phi$ be the restriction of $f$ to $T_{V_\ell}$. Then $\nabla \phi(X) = \mathcal P_{V_\ell} \nabla f(X)$ and $\nabla^2 \phi(X) = \mathcal P_{V_\ell} \nabla^2 f(X) \mathcal P_{V_\ell}$. Let $0 \le t \le 1$. Using a mean value inequality for the vector-valued function $\xi \mapsto \nabla \phi(X_\ell)  - \nabla \phi(X_\ell - \xi t \bar{h} P_{1,\ell})$, $\xi \in [0,1]$, gives that
\begin{align*}
\| \nabla \phi(X_\ell)  - \nabla \phi(X_\ell - t \bar{h} P_{1,\ell}) \|_\frob &\le \max_{\xi \in [0,1]} \| \nabla^2 \phi(X_\ell - \xi t \bar{h} P_{1,\ell}) \cdot t \bar{h} P_{1,\ell} \|_{\frob} \\
&\le \max_{\xi \in [0,1]} \| \nabla^2 \phi(X_\ell - \xi \bar{h} P_{1,\ell}) \|_{\frob \to \frob} \cdot t \bar{h} \| P_{1,\ell} \|_\frob.
\end{align*}
Here, $\|\cdot\|_{\mathrm{F}\to\mathrm{F}}$ is the operator norm induced by the Frobenius norm. Note that since $X_\ell \to X_*$ and $P_{1,\ell} \to 0$, the function $\phi$ is indeed differentiable on the line segment in question for $\ell$ large enough by assumption. Referring to~\eqref{eq: restricted smoothness}, it means that in that step we can replace $\lambda$ in~\eqref{eq: general estimate subspace} with
\[
\lambda_\ell \coloneqq \max_{\xi \in [0,1]} \| \nabla^2 \phi(X_\ell - \xi \bar{h} P_{1,\ell}) \|_{\frob \to \frob} = \max_{\xi \in [0,1]} \| \mathcal P_{V_\ell} \nabla^2 f(X_\ell - \xi \bar{h} P_{1,\ell}) \mathcal P_{V_\ell} \|_{\frob \to \frob}.
\]
If $P_{2,\ell}$ is used, we take
\[
\lambda_\ell \coloneqq \max_{\xi \in [0,1]} \| \mathcal P_{U_\ell} \nabla^2 f(X_\ell - \xi \bar{h} P_{2,\ell}) \mathcal P_{U_\ell} \|_{\frob \to \frob}
\]
instead. Let $\eta > 0$. By continuity, we conclude from $X_\ell \to X_*$ and $P_{1/2,\ell} \to 0$ that for $\ell$ large enough and in case the Armijo step size is used we can replace~\eqref{eq: general estimate subspace} with
\begin{equation}\label{eq: refined estimate}
f(X_\ell) - f(X_{\ell+1})  \ge \frac{\vartheta_\eta}{2 \lambda_\eta} \| \grad f(X_\ell) \|_\frob^2,
\end{equation}
where
\begin{align*}
\lambda_\eta &\coloneqq (1+\eta) \max\left\{ \| \mathcal P_{V_*} \nabla^2 f(X_*) \mathcal P_{V_*} \|_{\frob \to \frob}, \| \mathcal P_{U_*} \nabla^2 f(X_*) \mathcal P_{U_*} \|_{\frob \to \frob} \right\} \\ &= (1 + \eta) \lambda^*
\end{align*}
and
\[
\vartheta_\eta \coloneqq \min\left\{2 \beta \gamma (1-\gamma), \lambda_\eta \gamma \bar{h} \right\}.
\]
Obviously, for exact line search the same estimate~\eqref{eq: refined estimate} must then also hold. If it is combined with the gradient inequality~\eqref{eq: grad inequality matrix} as in the proof of Theorem~\ref{thm: linear rate}, one arrives at estimates like~\eqref{eq: limsup1} and~\eqref{eq: limsup2}, but with $\Lip$ replaced with $\lambda_\eta$ and $\vartheta$ replaced with $\vartheta_\eta$, which leads to the improved rates, since $\eta$ can be taken arbitrarily small.

If only exact line search is used for $\ell$ large enough, then the Armijo parameters in the above analysis are only auxiliary quantities. We can then optimize $\vartheta_\eta$ by choosing $\gamma \coloneqq \frac{1}{2}$, $\beta$ arbitrarily close to one, and $\bar{h}$ large enough, so that $\vartheta_\eta$ is arbitrarily close to $\frac{1}{2}$.
\end{proof}

\subsection{Alternating least squares}\label{sec: ALS}

Interestingly, our results on the gradient-related algorithms based on Gauss--Southwell rule allow to make some conclusions on another popular algorithm used in low-rank optimization, the \emph{alternating least squares} (ALS) algorithm, in the context of $\Lip$-smooth functions. Ignoring details on implementation, this algorithm operates as follows.

\begin{algorithm}[H]
\small
\caption{Alternating least squares (ALS)}
\label{algo: ALS}
\begin{algorithmic}[1]
\Input
Low-rank decomposition $X_0^{} = U_0^{} V_0^\top$ with $U_0 \in \R^{m \times k}$, $V_0 \in \R^{n \times k}$
\For
{$\ell = 0,1,2,\dots$}
\State Replace $V_\ell$ with a QR-factor: $V_{\ell} \leftarrow \mathrm{qf}(V_{\ell})$.
\State Find
\[
U_{\ell+1} \in \argmin_{U \in \R^{m \times k}} f(U V_\ell^\top).
\]
\State Replace $U_{\ell+1}$ with a QR-factor: $ U_{\ell+1} \leftarrow \mathrm{qf}(U_{\ell+1})$.
\State Find
\[
V_{\ell+1} \in \argmin_{V \in \R^{n \times k}} f(U_{\ell+1} V^\top).
\]
\EndFor
\end{algorithmic}
\end{algorithm}

The name alternating least squares stems from the fact that this algorithm is usually applied to convex quadratic cost functions $f$ such as $L_2$-losses, which makes it possible to solve the minimization problems in the substeps via least squares problems. Note that the orthogonalization steps in lines 2 and 4 are not needed from a theoretical perspective, but ensure the numerical stability of the subproblems in practice.

Let $X_\ell = U_{\ell}^{} V_\ell^\top$. There are two simple but crucial observations for relating the ALS algorithm to our previous results. The first is that by construction it trivially follows a Gauss--Southwell rule for selecting the next block variable to be updated. In fact, before the first half-step, which finds an update $X_{\ell + 1/2} = U_{\ell + 1}^{} V_\ell^\top$ in the subspace~$T_{V_\ell}$, the projected gradient
\(
P_{2,\ell} = \mathcal P_{U_\ell} \nabla f(X_{\ell})
\)
onto $T_{U_\ell}$ is just zero (for $\ell \ge 1$), since by the result of the previous step $X_\ell$ is the global minimizer on $T_{U_\ell}$. Hence the Gauss--Southwell rule (trivially) also would select the subspace $T_{V_\ell}$ for the next update, and a corresponding gradient step in $T_{V_\ell}$ would satisfy the estimates in~\eqref{eq: general estimate subspace}, but even without the factor $1/2$ since $\| P_{2,\ell} \|_\frob = \| \grad f(X_\ell) \|_\frob$.

The second observation is that the decrease in function value for the ALS half-step is better than in any of the gradient methods, since $X_{\ell + 1/2}$ is the global minimizer on~$T_{V_\ell}$. Hence the best of the estimates~\eqref{eq: general estimate subspace}, which is the one with the exact line search, is valid for the ALS half-step. Analogous considerations apply to the second half-step. Relabelling the iterates of Algorithm~\ref{algo: ALS} such that
\[
\tilde X_{2\ell} = X_\ell, \quad \tilde X_{2\ell + 1} = X_{\ell+1/2},
\]
we obtain from these considerations that
\begin{equation}\label{eq: estimate ALS}
f(\tilde X_\ell) - f(\tilde X_{\ell+1}) \ge \frac{1}{2\lambda} \| \grad f(\tilde X_\ell) \|_\frob^2,
\end{equation}
that is, according to the previous notation we have $\vartheta_\ell = \vartheta = 1$ for all half-steps. In a sense, this approach of obtaining estimates by comparison to a gradient method can be seen as an instance of the accelerated line-search methods as considered in~\cite{Absil2009}.

The estimate of the form~\eqref{eq: estimate ALS} is the basis for the statements on the gradient convergence such as in Corollary~\ref{cor: subspace}. This allows to formulate the corresponding conclusion for the sequence $\tilde X_\ell$, essentially by using $\vartheta = 1$ in Corollary~\ref{cor: subspace}. If we switch to the original ALS sequence $X_\ell = \tilde X_{2\ell}$ of Algorithm~\ref{algo: ALS} consisting of two half-steps, the following result is obtained.

\begin{corollary}\label{cor: ALS}
Assume that $f \colon \R^{m \times n} \to \R$ is $\Lip$-smooth and bounded from below. Let $(X_\ell^{}) = (U_\ell^{} V_\ell^\top)$ be the iterates of Algorithm~\ref{algo: ALS}.  Then the generated sequence $f(X_\ell)$ of function values is monotonically decreasing and converges to some value $f_* \ge \inf_{X \in \mathcal{M}_{\le k}} f(X)$. Moreover, $\mathcal P_{U_\ell,V_\ell}(\nabla f(X_\ell)) \to 0$ and $\grad f(X_\ell) \to 0$. Every accumulation point $X_*$ of the sequence $(X_\ell)$ satisfies $f(X_*) = f_*$ and $\grad f(X_*) = 0$, which means that $X_*$ is a critical point of $f$ on the manifold $\mathcal{M}_{k'}$ where $k' \coloneqq \rank(X_*) \le k$. For every nonnegative integer $j$ it holds that
\begin{equation*}
\min \{ \| \grad f(X_\ell) \|_\frob, \, \| \grad f(X_{\ell+1/2}) \|_\frob \colon 0 \le \ell \le j \} \le \left( 2\Lip \cdot \frac{f(X_0) - f_*}{2j+1} \right)^{1/2}.
\end{equation*}
In particular, given $\varepsilon > 0$ the algorithm returns a point satisfying $\| \grad f(X_\ell) \|_\frob \le \varepsilon$ or $\| \grad f(X_{\ell+1/2}) \|_\frob \le \varepsilon$ after at most $ \lceil \Lip (f(X_0) - f_*)\varepsilon^{-2} - \frac{1}{2} \rceil$ iterations.
\end{corollary}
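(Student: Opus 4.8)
The proof will be obtained as an application of the analysis already carried out for Algorithm~\ref{algo: Subspace GD}. The plan is to pass to the relabelled sequence $\tilde X_{2\ell} \coloneqq X_\ell$, $\tilde X_{2\ell+1} \coloneqq X_{\ell+1/2}$, so that each transition $\tilde X_\ell \to \tilde X_{\ell+1}$ is the exact minimization of $f$ over one of the linear subspaces $T_{V_\ell}$ or $T_{U_{\ell+1}}$, each of which contains the current iterate and on which the restriction of $f$ is $\Lip$-smooth (a restriction of a $\Lip$-smooth function to a linear subspace is $\Lip$-smooth). By \eqref{eq: optimal decrease} applied to that restriction, the minimization achieves at least the decrease of a gradient step with step size $1/\Lip$ along the corresponding negative projected gradient, that is, $f(\tilde X_\ell) - f(\tilde X_{\ell+1}) \ge \tfrac{1}{2\Lip} \| \mathcal{P}_{\bullet}(\nabla f(\tilde X_\ell)) \|_\frob^2$, where $\mathcal{P}_{\bullet}$ denotes the orthogonal projector onto the optimized subspace.

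The key point, and the one needing some care, is that for $\ell \ge 1$ this projected gradient already equals $\mathcal{P}_{U_\ell,V_\ell}(\nabla f(\tilde X_\ell))$, so that no Gauss--Southwell factor $\tfrac12$ is lost and $\vartheta_\ell = \vartheta = 1$, giving exactly \eqref{eq: estimate ALS}. Indeed, the half-step producing $\tilde X_\ell$ was itself an exact minimization over the \emph{other} subspace, say over $T_V$ for some $V$ with orthonormal columns; its first-order optimality condition reads $\nabla f(\tilde X_\ell)\,V = 0$ by \eqref{eq: partial gradients of g}, hence $\mathcal{P}_V(\nabla f(\tilde X_\ell)) = 0$. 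Plugging $\mathcal{P}_V(\nabla f(\tilde X_\ell)) = 0$ into the projector formula for $\mathcal{P}_{U,V}$ shows that $\| \mathcal{P}_{U_\ell,V_\ell}(\nabla f(\tilde X_\ell)) \|_\frob$ collapses to the norm of the column-space projection, which is precisely $\| \mathcal{P}_{\bullet}(\nabla f(\tilde X_\ell)) \|_\frob$ for the subspace optimized next; combined with the inequality $\| \mathcal{P}_{U,V}(\nabla f(X)) \|_\frob \ge \| \grad f(X) \|_\frob$ recorded around \eqref{eq: decrease Riemannian} (valid also when $\rank(X) < k$), this yields $\| \mathcal{P}_{\bullet}(\nabla f(\tilde X_\ell)) \|_\frob \ge \| \grad f(\tilde X_\ell) \|_\frob$ and hence \eqref{eq: estimate ALS}. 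One must also note en route that the QR re-orthogonalizations in lines~2 and~4 of Algorithm~\ref{algo: ALS} do not alter the iterates $X_\ell$ themselves, only their representations, since the column/row spaces that define the subproblems are unchanged. For the single excluded half-step $\tilde X_0 \to \tilde X_1$ I would simply use monotonicity $f(\tilde X_0) \ge f(\tilde X_1)$, since $\mathcal{P}_{U_0}(\nabla f(X_0))$ need not vanish.

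The remainder follows the template of Corollary~\ref{cor: subspace}. Telescoping \eqref{eq: estimate ALS} over $\ell = 1,\dots,2j+1$ and discarding the nonnegative first-step contribution gives $f(X_0) - f_* \ge \tfrac{1}{2\Lip}\sum_{\ell=1}^{2j+1} \| \grad f(\tilde X_\ell) \|_\frob^2 \ge \tfrac{2j+1}{2\Lip} \min_{1 \le \ell \le 2j+1} \| \grad f(\tilde X_\ell) \|_\frob^2$; since $\tilde X_1,\dots,\tilde X_{2j+1}$ are exactly $X_{1/2},X_1,\dots,X_j,X_{j+1/2}$, rearranging yields the stated $\min$-bound, and solving $2\Lip(f(X_0)-f_*)/(2j+1) \le \varepsilon^2$ for $j$ yields the iteration count $\lceil \Lip(f(X_0)-f_*)\varepsilon^{-2} - \tfrac12 \rceil$. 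The monotone convergence of $f(X_\ell)$ to some $f_* \ge \inf_{X \in \mathcal{M}_{\le k}} f(X)$, the convergences $\mathcal{P}_{U_\ell,V_\ell}(\nabla f(X_\ell)) \to 0$ and $\grad f(X_\ell) \to 0$, and the criticality of every accumulation point on $\mathcal{M}_{k'}$ with $k' \coloneqq \rank(X_*) \le k$ then follow exactly as in Corollary~\ref{cor: subspace}, using the lower semicontinuity of $X \mapsto \| \grad f(X) \|_\frob$ to pass to the limit. The only real obstacle is thus the bookkeeping in the second paragraph: correctly identifying, at each iterate, the subspace that was just optimized and verifying that the gradient projected onto it vanishes there.
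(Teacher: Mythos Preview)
Your proposal is correct and follows essentially the same route as the paper: pass to the half-step sequence $\tilde X_\ell$, observe that the previously optimized block has vanishing projected gradient so that $\|P_{1/2}\|_\frob = \|\mathcal P_{U,V}(\nabla f(\tilde X_\ell))\|_\frob \ge \|\grad f(\tilde X_\ell)\|_\frob$ (i.e., $\vartheta_\ell = 1$), bound the ALS decrease from below by that of an exact-line-search gradient step on the subspace, and then invoke the telescoping argument of Corollary~\ref{cor: subspace}. Your explicit handling of the first half-step via bare monotonicity and your remark that the QR steps leave the iterates unchanged are details the paper leaves implicit, but the structure of the argument is the same.
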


It is interesting to note that~\eqref{eq: estimate ALS} can be applied in a slightly different way, as it implies the two separate estimates
\[
f(X_\ell) - f(X_{\ell+1}) \ge f(X_{\ell+1/2}) - f(X_{\ell+1}) \ge \| \grad f(X_{\ell+1/2}) \|_\frob^2
\]
and
\[
f(X_\ell) - f(X_{\ell+1}) \ge f(X_\ell) - f(X_{\ell+1/2}) \ge \| \grad f(X_\ell) \|_\frob^2,
\]
which under the assumptions of Corollary~\ref{cor: ALS} lead to the statement
\[
\min_{0 \le \ell \le j} \max \big\{ \| \grad f(X_\ell) \|_\frob,\| \grad f(X_{\ell+1/2}) \|_\frob \big\}
\le  \left( 2\Lip \cdot \frac{f(X_0) - f_*}{j+1} \right)^{1/2}.
\]
In this form the result is quite analogous to~\cite[Theorem~3.2]{Beck2015}, which (as a special case) provides global rates for the (Euclidean) gradients in the alternating minimization method for non-convex functions with two block variables. In our case, this would be the function $g(L,R)$, however the assumptions in~\cite{Beck2015} require that the partial gradients (or at least one of them) satisfy uniform Lipschitz conditions, which  by~\eqref{eq: Lipschitz for L} and~\eqref{eq: Lipschitz for R} cannot be guaranteed due to the non-uniqueness of the $LR$ factorization. We fixed this by replacing the partial gradients with their scaled or Riemannian counterparts. Apart from that, the reasoning of~\cite{Beck2015} is actually similar to ours, as it is also based on comparing the single steps in alternating minimization to partial gradient descent. Finally note that as in section~\ref{sec: embedded version} the Riemannian gradients in our results for ALS could in fact be replaced by the projection of $\nabla f(X_\ell)$ onto $T_{U_\ell,V_\ell}$, which in theory provides slightly more precise information in case of rank-deficient iterates or accumulation points.

The next natural step is to apply the local linear convergence analysis as conducted for Theorems~\ref{thm: linear rate} and~\ref{thm: linear rate 2} to ALS. Here one principal difficulty is to ensure $X_{\ell+1} - X_\ell \to 0$, which is a crucial ingredient in the proof of Theorem~\ref{thm: linear rate}. For the gradient-related search directions as considered there this follows from $\grad f(X_\ell) \to 0$ together with~\eqref{eq: distance of iterates}, but for ALS it seems to require additional assumptions (or modifications of the algorithm). 
One such assumption is that the cost function $f$ is strongly convex, that is, there exists $\mu > 0$ such that
\[
f(Y) \ge f(X) + \langle \nabla f(X), Y- X \rangle_{\frob} + \frac{\mu}{2} \| Y - X \|_{\frob}^2
\]
for all $X,Y \in \R^{m \times n}$. This still covers a large field of applications.

\begin{theorem}
Assume that $f \colon \R^{m \times n} \to \R$ is $\Lip$-smooth and strongly convex. Let $(X_\ell) = (U_\ell^{} V_\ell^\top)$ be the iterates of Algorithm~\ref{algo: ALS}. Assume that $(X_\ell)$ has an accumulation point $X_*$ with full rank $k$ such that $f$ is twice continuously differentiable in a neighborhood of $X_*$ and the Riemannian Hessian $H(X_*)$ of $f \vert_{\mathcal M_k}$ at $X_*$ is positive-definite. Then $X_*$ is the unique limit point of $(X_\ell)$, an isolated local minimizer of $f \vert_{\mathcal M_k}$, and the convergence is R-linear with a rate
\[
\limsup_{\ell \to \infty} \| X_\ell - X_* \|^{1/\ell} \le 1 - \frac{\lambda_{\min}^*}{\lambda^*}  < 1,
\]
where $\lambda_{\min}^*$ is the smallest eigenvalue of $H(X_*)$ and $\lambda^*$ is the maximum between the operator norms (induced by the Frobenius norm) of the two projected Hessians $\mathcal P_{V_*} \nabla^2 f(X_*) \mathcal P_{V_*}$ and $\mathcal P_{U_*} \nabla^2 f(X_*) \mathcal P_{U_*}$. For the function values it holds that
\[
\limsup_{\ell \to \infty} \, [f(X_\ell) - f(X_*)]^{1/\ell} \le \left( 1 -  \frac{\lambda_{\min}^*}{\lambda^*} \right)^2.
\]
\end{theorem}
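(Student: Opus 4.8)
The plan is to carry the proofs of Theorems~\ref{thm: linear rate} and~\ref{thm: linear rate 2} over to the ALS setting, working with the relabeled sequence $(\tilde X_m)$ defined by $\tilde X_{2\ell} = X_\ell$ and $\tilde X_{2\ell+1} = X_{\ell+1/2}$, for which~\eqref{eq: estimate ALS} holds with $\vartheta = 1$; the only genuinely new ingredient is the use of strong convexity to control the step lengths. We may assume $X_\ell \neq X_*$ for all $\ell$, since otherwise the exact minimization over $T_{V_*}$ or $T_{U_*}$ forces every subsequent iterate to coincide with $X_*$---because $\grad f(X_*) = 0$ implies $\mathcal P_{V_*}\nabla f(X_*) = \mathcal P_{U_*}\nabla f(X_*) = 0$---and there is nothing to prove. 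By Corollary~\ref{cor: ALS} the values $f(\tilde X_m)$ decrease monotonically to some $f_*$, we have $\grad f(\tilde X_m) \to 0$, and $f(X_*) = f_*$ by continuity since $X_*$ is an accumulation point.

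First I would show $\tilde X_{m+1} - \tilde X_m \to 0$, which in Theorem~\ref{thm: linear rate} was immediate from~\eqref{eq: distance of iterates} but is less obvious here. Each half-step is the exact minimization of the restriction of $f$ to a linear subspace ($T_{V_\ell}$ or $T_{U_{\ell+1}}$), which is $\mu$-strongly convex because $f$ is; since $\tilde X_{m+1}$ is the corresponding minimizer and the projected gradient vanishes there, strong convexity gives $f(\tilde X_m) - f(\tilde X_{m+1}) \ge \frac{\mu}{2}\|\tilde X_{m+1} - \tilde X_m\|_\frob^2$, whose left-hand side tends to zero because $f(\tilde X_m)$ converges. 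Together with $X_*$ being an accumulation point, this enables the bootstrap from the proof of Theorem~\ref{thm: linear rate}: once $\tilde X_m$ enters a small enough neighborhood of $X_*$---which happens for infinitely many $m$---the local contraction derived below keeps all subsequent $\tilde X_{m'}$ in it, so $\tilde X_m \to X_*$ and hence $X_\ell \to X_*$, making $X_*$ the unique limit point. Since rank is lower semicontinuous and $\rank(X_*) = k$, the $\tilde X_m$ have rank exactly $k$ for $m$ large and thus lie on $\mathcal M_k$, with $\mathcal P_{V_\ell} \to \mathcal P_{V_*}$ and $\mathcal P_{U_{\ell+1}} \to \mathcal P_{U_*}$ along the iteration.

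It then remains to reproduce, verbatim from the proofs of Lemma~\ref{lemma: Lojasiewisc} and Theorem~\ref{thm: linear rate}, the local chart $\psi$ for $\mathcal M_k$ at $X_*$, the positive-definite Hessian $\hat H \coloneqq \nabla^2 (f\circ\psi)(0)$, the norm $\|y\|_{\hat H}$, the \L{}ojasiewicz inequality $f(\psi(y)) - f(X_*) \le (\frac{1}{2\lambda_{\min}^*} + \varepsilon)\,\|\grad f(\psi(y))\|_\frob^2$ near zero, and the two-sided Taylor bound $(1-\delta)\|y\|_{\hat H}^2 \le f(\psi(y)) - f(X_*) \le (1+\delta)\|y\|_{\hat H}^2$. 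The one place where the ALS argument deviates from---and actually improves on---the proof of Theorem~\ref{thm: linear rate 2} is the per-half-step decrease: since $\tilde X_{m+1}$ minimizes $f$ over a subspace that contains the gradient line through $\tilde X_m$, the decrease is at least what the locally optimal step size along that line gives, and invoking the remark at the end of section~\ref{sec: gradient descent methods} together with $\mathcal P_{V_\ell}\to\mathcal P_{V_*}$, $\mathcal P_{U_{\ell+1}}\to\mathcal P_{U_*}$ yields, for $m$ large and any $\eta > 0$,
\[
f(\tilde X_m) - f(\tilde X_{m+1}) \;\ge\; \frac{1}{2(1+\eta)\lambda^*}\,\|\grad f(\tilde X_m)\|_\frob^2 ,
\]
with $\lambda^*$ the maximum of the two projected-Hessian operator norms as in the statement; here no factor $1/2$ is lost because the projection of $\nabla f(\tilde X_m)$ onto the subspace optimized in the preceding half-step vanishes, so $\|\grad f(\tilde X_m)\|_\frob$ equals the norm of the gradient of the restriction being minimized in the current half-step. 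Combining this with the \L{}ojasiewicz inequality at $\tilde X_m\in\mathcal M_k$, exactly as~\eqref{eq: grad inequality matrix} is combined with~\eqref{eq: general estimate subspace} in the proof of Theorem~\ref{thm: linear rate}, contracts $f(\tilde X_m) - f(X_*)$ by a factor $q_{\eta,\varepsilon}$ that tends to $1 - \lambda_{\min}^*/\lambda^*$ as $\eta,\varepsilon\to 0$ (the consistency $f(\tilde X_{m+1})\ge f(X_*)$ incidentally forces $q_{\eta,\varepsilon}\ge 0$, i.e.\ $\lambda_{\min}^*\le\lambda^*$). Feeding $q_{\eta,\varepsilon}$ into the two-sided Taylor bound as in estimates~\eqref{eq: limsup1} and~\eqref{eq: limsup2} and letting $\delta,\eta,\varepsilon\to 0$ gives $\limsup_m [f(\tilde X_m) - f(X_*)]^{1/m} \le 1 - \lambda_{\min}^*/\lambda^*$ and, using that $\psi$ is locally Lipschitz, $\limsup_m \|\tilde X_m - X_*\|_\frob^{1/m} \le (1 - \lambda_{\min}^*/\lambda^*)^{1/2}$; passing back to $X_\ell = \tilde X_{2\ell}$, i.e.\ squaring, yields the two stated rates, while positive-definiteness of $\hat H$ makes $X_*$ an isolated local minimizer of $f\vert_{\mathcal M_k}$ as in Lemma~\ref{lemma: Lojasiewisc}.

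The main obstacle is precisely the one flagged before the statement: obtaining $\tilde X_{m+1} - \tilde X_m \to 0$ (equivalently $X_{\ell+1} - X_\ell\to 0$) without an explicit step-length formula. This is the one point where strong convexity of $f$ is essential; with it in hand the remainder is a routine adaptation of Theorems~\ref{thm: linear rate} and~\ref{thm: linear rate 2}, the extra bookkeeping being only the alternation of the two half-steps and the verification that the restricted smoothness constants of the subspace restrictions converge to the two operator norms defining $\lambda^*$.
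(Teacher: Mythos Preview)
Your proposal is correct and follows essentially the same route as the paper's proof: both use strong convexity to obtain $f(\tilde X_m)-f(\tilde X_{m+1})\ge\frac{\mu}{2}\|\tilde X_{m+1}-\tilde X_m\|_\frob^2$ and hence $\tilde X_{m+1}-\tilde X_m\to 0$, then rerun the chart-based argument of Theorems~\ref{thm: linear rate} and~\ref{thm: linear rate 2} for the half-step sequence, exploiting that ALS dominates the locally optimal gradient step on the relevant subspace and that $\|P_{1/2}\|_\frob=\|\grad f(\tilde X_m)\|_\frob$ removes the extra factor $1/2$, before squaring via $X_\ell=\tilde X_{2\ell}$. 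The only cosmetic difference is that the paper phrases the refined decrease by comparison with Armijo backtracking and then sends the auxiliary parameters to their limits ($\vartheta_\eta\to\tfrac12$), whereas you invoke the locally optimal step size directly; the resulting estimate $\frac{1}{2(1+\eta)\lambda^*}\|\grad f(\tilde X_m)\|_\frob^2$ is the same.
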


We note again that $\lambda^*$ can be bounded from above by the largest eigenvalue of $\mathcal P_{U_*,V_*} \nabla f(X_*) \mathcal P_{U_*,V_*}$, the largest eigenvalue of the Riemannian Hessian of $f\vert_{\mathcal M_k}$ at $X_*$, or simply the largest eigenvalue of $\nabla^2 f(X_*)$.

\begin{proof}
Instead of $(X_\ell)$ we consider as above the sequence $(\tilde X_\ell)$ of half-steps. It satisfies
\[
f(\tilde X_\ell) - f(\tilde X_{\ell+1}) \ge \langle \nabla f(\tilde X_{\ell+1}), \tilde X_\ell - \tilde X_{\ell+1} \rangle_\frob + \frac{\mu}{2} \| \tilde X_\ell - \tilde X_{\ell+1} \|_\frob^2 = \frac{\mu}{2} \| \tilde X_\ell - \tilde X_{\ell+1} \|_\frob^2,
\]
since by construction $\tilde X_{\ell+1}$ minimizes $f$ on a linear subspace that contains $\tilde X_\ell - \tilde X_{\ell+1}$ (hence $\langle \nabla f(\tilde X_{\ell+1}), \tilde X_\ell - \tilde X_{\ell+1} \rangle_\frob = 0$). The sequence $f(\tilde X_\ell)$ is monotonically decreasing and converges to some $f_*$ by Corollary~\ref{cor: ALS}. In particular, $f(\tilde X_\ell) - f(\tilde X_{\ell+1}) \to 0$, which by the above inequality implies $\| \tilde X_\ell - \tilde X_{\ell+1} \|_\mathrm{F} \to 0$. From here on one can first repeat the arguments in the proof of Theorem~\ref{thm: linear rate} for the sequence $(\tilde X_\ell)$ (one has $\vartheta_\ell = 1$ by~\eqref{eq: estimate ALS}), which in particular prove that $X_*$ is the unique limit point of the sequence.

One can then proceed precisely as in the proof of Theorem~\ref{thm: linear rate 2} for obtaining the refined estimate
\[
 f(\tilde X_\ell) - f(\tilde X_{\ell+1}) \ge \frac{\vartheta_\eta}{\lambda_\eta} \| \grad f(\tilde X_\ell) \|^2_\frob
\]
similar to~\eqref{eq: refined estimate} with a local smoothness constant, which holds for $\ell$ large enough for a projected gradient step from $\tilde X_\ell$ when using the Armijo backtracking, and hence by construction it also holds for the ALS substep as it maximizes the left-hand side in the corresponding subspace. Note that the above estimate differs from~\eqref{eq: refined estimate} by the factor $1/2$, which again accounts for the fact that $\| P_{1/2,\ell} \|_{\frob} = \| \grad f(\tilde X_\ell) \|_\frob$ by the properties of ALS. Since $\lambda_\eta$ is arbitrarily close to $\lambda^*$ and $\vartheta_\eta$ can be chosen arbitrarily close to $1/2$ using suitable Armijo parameters, one obtains as in the aforementioned proofs (essentially replacing $\vartheta_\eta$ with $2 \vartheta_\eta \approx 1$) the R-linear rates
\(
\limsup_{\ell \to \infty} \| \tilde X_\ell - X_* \|^{1/\ell} \le \left( 1 - \frac{\lambda_{\min}^*}{\lambda^*} \right)^{1/2}
\)
and
\(
\limsup_{\ell \to \infty} \, [f(\tilde X_\ell) - f(X_*)]^{1/\ell} \le  1 -  \frac{\lambda_{\min}^*}{\lambda^*}.
\)
Noting that $X_\ell = \tilde X_{2\ell}$ gives the result.
\end{proof}

\section{Numerical experiments}\label{sec: numerical experiments}

We present the results of some numerical experiments that highlight the differences between Algorithms~\ref{algo: balanced GD} and~\ref{algo: Subspace GD}. As objective function, we take the strongly convex quadratic function 
\[
 f(X) \coloneqq \tfrac{1}{2} \tr( X^\top \mathcal{A}(X) ) -    \tr( X^\top B )
\]
with a prescribed condition number $\kappa_{\mathcal A}$. 
The linear, symmetric positive-definite operator $\mathcal{A}\colon \mathbb{R}^{n \times n} \to \mathbb{R}^{n \times n}$ is random with $n$ logarithmically spaced eigenvalues between $\lambda_{\min} = 1/\kappa_{\mathcal A} > 0$ and $\lambda_{\max} = 1$. The unique minimizer $X_*$ of $f$ is constructed randomly but with a prescribed rank $k$ and prescribed effective condition number $\kappa_{X_*} = \sigma_1(X_*) / \sigma_k(X_*)$. Its singular values are also logarithmically spaced. The matrix $B$ is determined from $\nabla f(X^ *) = \mathcal{A}(X_*) - B = 0$. Since $f$ is quadratic, an exact line search can be easily calculated in closed form. Other line searches will be tested and explained further below.

It has been reported in a number of works~\cite{Tanner2016,TongMaChi2021,Vandereycken2013} that ALS (and ALS-like methods that solve the subproblems in steps 3 and 5 of Algorithm~\ref{algo: ALS} exactly) are often not competitive compared with (Riemannian) gradient methods like Algorithms~\ref{algo: balanced GD} and~\ref{algo: Subspace GD}. A general observation is that while ALS converges in less steps than gradient methods, the computational cost for solving these subproblems is not offset when the rank is sufficiently large. A precise comparison would be very problem dependent though. We will not compare the computational costs of Algorithms~\ref{algo: balanced GD} and~\ref{algo: Subspace GD} with that of Algorithm~\ref{algo: ALS}.

We first remark that Algorithms~\ref{algo: balanced GD} and \ref{algo: Subspace GD} have similar computational costs. Being gradient-based methods, they both require the computation of $\nabla f(X)$ for a rank-$k$ matrix $X$. As other dominant cost per step, both algorithms require $k$ matrix-vector products with $\nabla f$ and $(\nabla f)^\top$. In large-scale applications, it is important to exploit the low-rank structures\footnote{Consider for example the case of a gradient that has low operator rank: $\nabla f(X) = \sum_{i=1}^ \ell A_i X B_i$. If $X=GH^\top$, an efficient evaluation of $\nabla f(X) G$ is $\sum_{i=1}^ \ell (A_i G) (H^\top B_i G)$.} since otherwise these computations will dominate all others. Next, both algorithms compute two compact QR factorizations of matrices of same size. Since Algorithm~\ref{algo: Subspace GD} does not perform explicit balancing based on a small $k \times k$ SVD, it is slightly faster per step. As we will see in the numerical experiment later, Algorithm~\ref{algo: Subspace GD} also converges faster for ill-conditioned problems.

\vspace*{-1ex}
\paragraph{Influence of balancing and orthogonalization.} We report on the behavior of Algorithms~\ref{algo: balanced GD} and~\ref{algo: Subspace GD} for $n=30$ and rank $k=8$ in the above setting. Each algorithm was started in the same set of 100 random initial points $X_0$. We also included the results for a variant of Algorithm~\ref{algo: balanced GD} without balancing, where line 4 in Algorithm~\ref{algo: balanced GD} simply reads $L_{\ell+1} = L_+$ and $R_{\ell+1} = R_+$.

In Figure~\ref{fig:influence balancing}, we show the results for $\kappa_{\mathcal A}=20$ and $\kappa_{X_*} = 5$, and with exact line search. It is immediately clear that balancing (Algorithm~\ref{algo: balanced GD}) or orthogonalization (Algorithm~\ref{algo: Subspace GD}) leads to a substantial improvement in convergence speeds. In addition, Algorithm~\ref{algo: Subspace GD}, which uses orthogonalization (or can be seen as a Riemannian block method), is considerably faster and has less variance than Algorithm~\ref{algo: balanced GD}, which only balances.

\begin{figure}[t!]
    \centering
    \begin{minipage}{.5\textwidth}
        \centering
        \includegraphics[width=\linewidth]{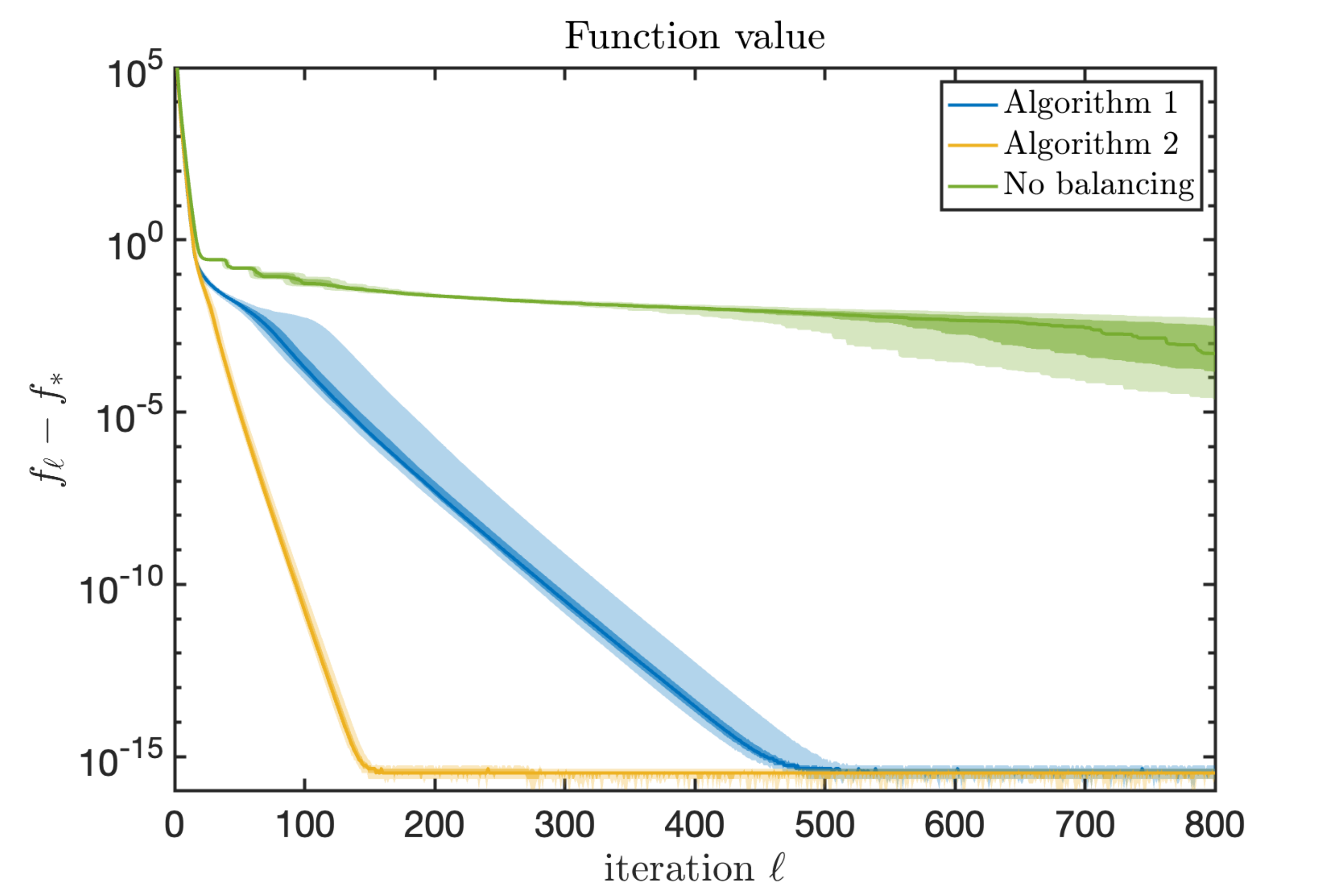}
    \end{minipage}%
    \begin{minipage}{0.5\textwidth}
        \centering
        \includegraphics[width=\linewidth]{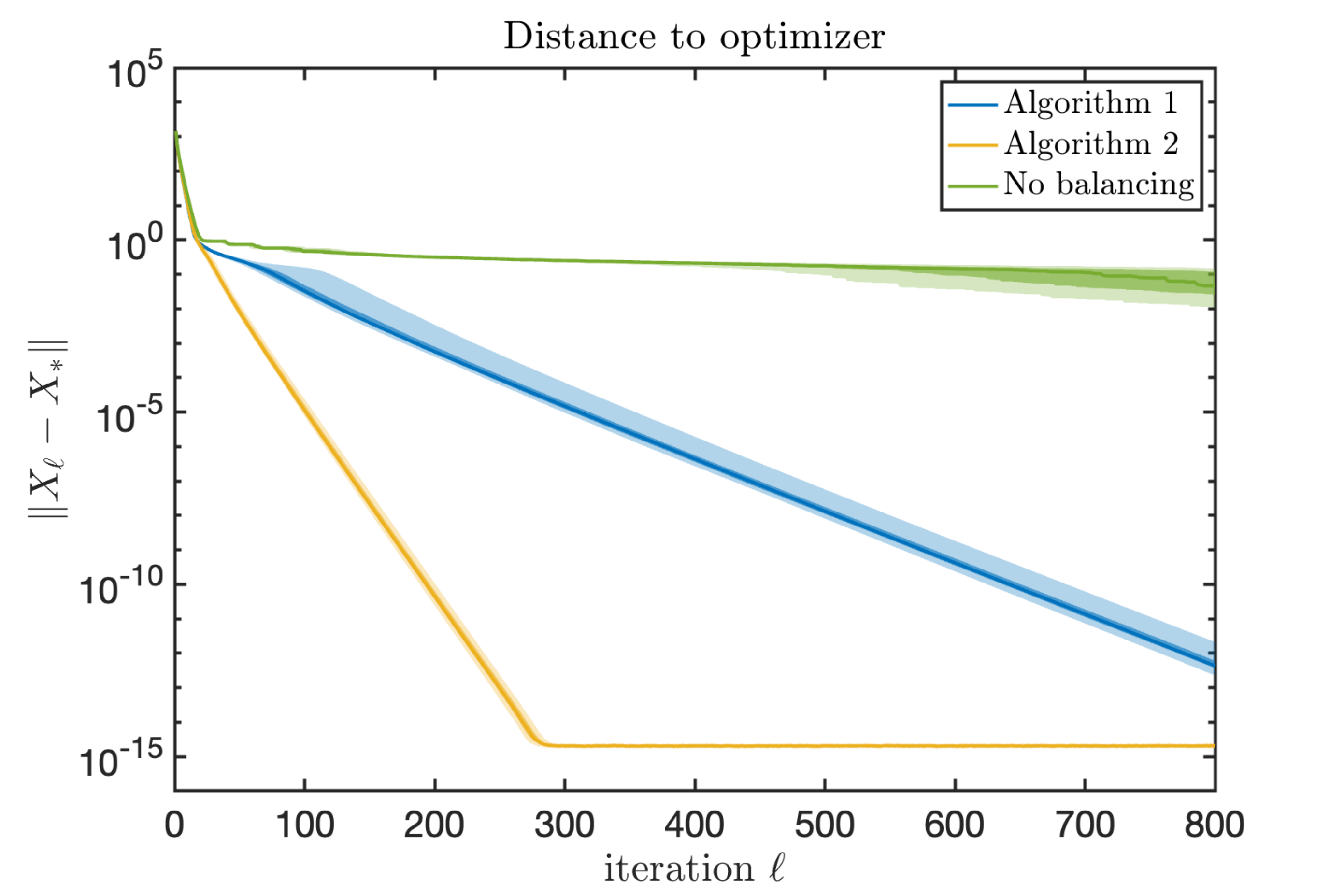}
    \end{minipage}
    \begin{minipage}{.5\textwidth}
        \centering
        \includegraphics[width=\linewidth]{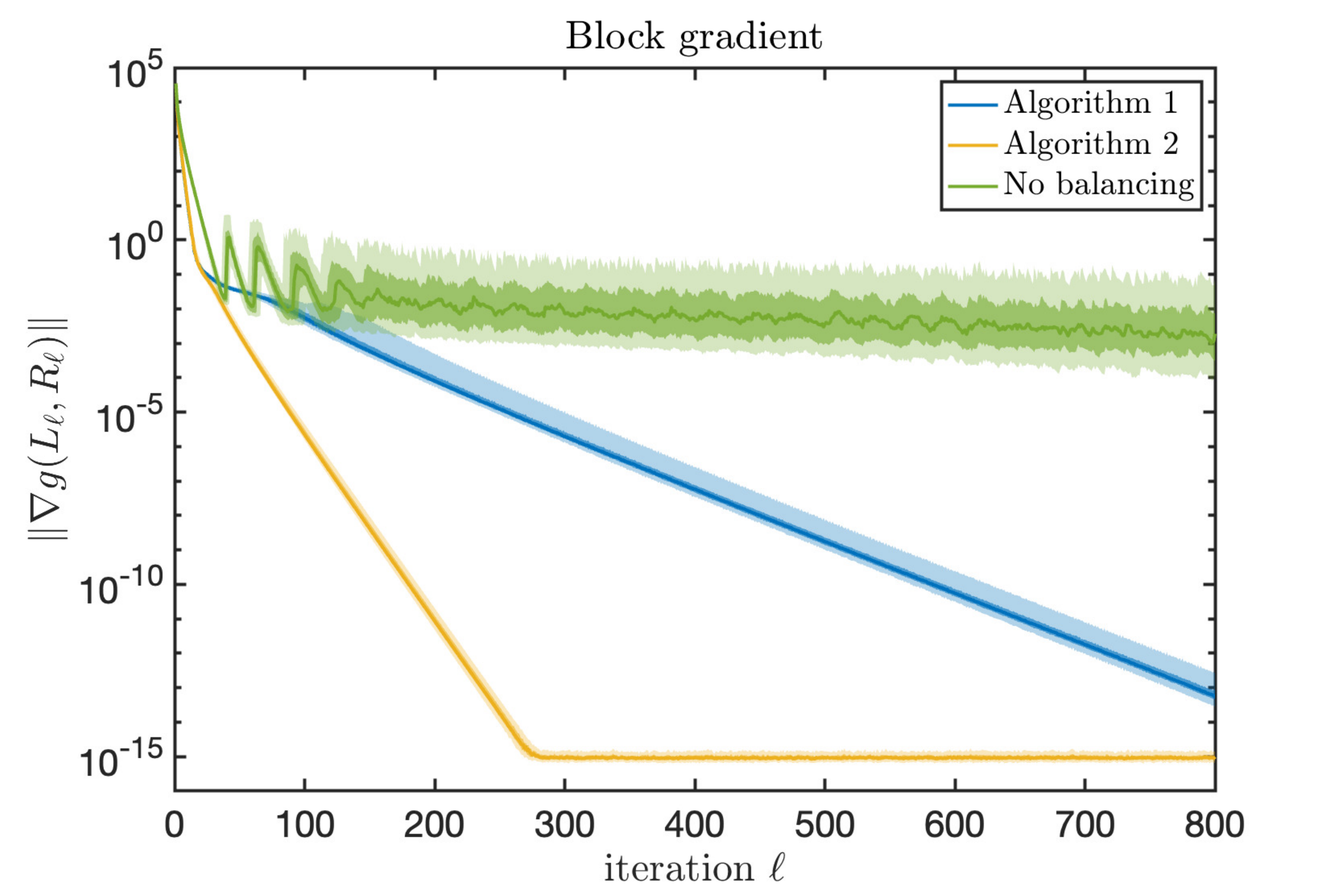}
    \end{minipage}%
    \begin{minipage}{0.5\textwidth}
        \centering
        \includegraphics[width=\linewidth]{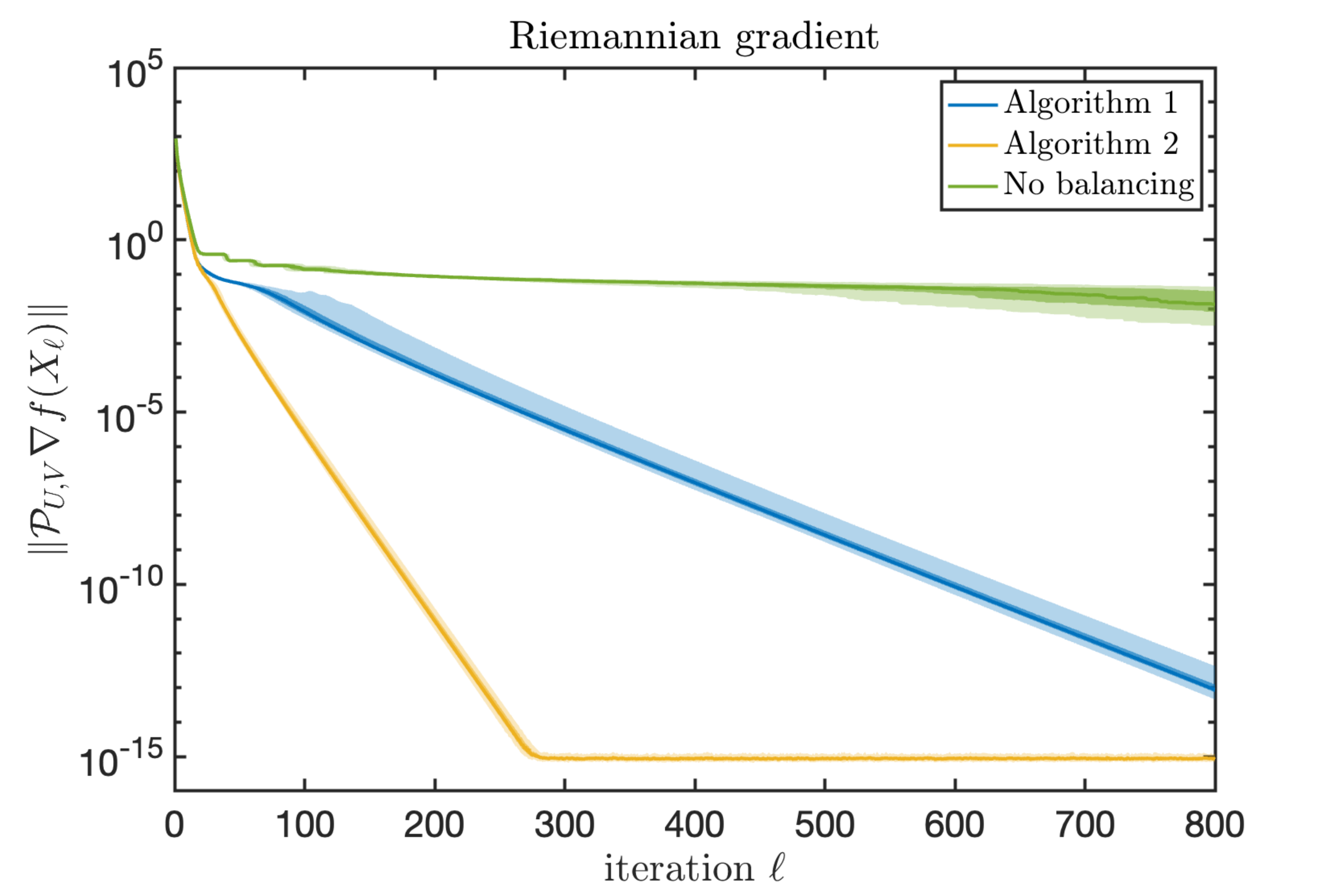}
    \end{minipage}
    \caption{Strongly convex quadratic function with condition number $\kappa_{\mathcal A} = 20$. The effective condition number of the minimizer $X_*$ is $\kappa_{X_*} = 5$. The shaded areas correspond to the 50 and 90 percentiles of the relevant quantity in each panel.}\label{fig:influence balancing}
\end{figure}

\vspace*{-1ex}
\paragraph{Influence of condition numbers $\kappa_{X_*}$ and $\kappa_{\mathcal A}$.}  As next experiment, we vary the condition numbers $\kappa_{X_*}$ and $\kappa_{\mathcal A}$. We no longer report on the version of Algorithm~\ref{algo: balanced GD} without balancing since it is clearly inferior. The size $n=30$ and rank $k=8$ are the same as above, and we take again 100 random realizations.

In Figure~\ref{fig:influence condX}, the condition number $\kappa_{\mathcal A}=20$ of $f$ is kept the same but the effective condition number $\kappa_{X_*}$ of the rank $k=8$ minimizer $X_*$ is varied from $1.1$ to $100$. We clearly see that the Riemannian Algorithm~\ref{algo: Subspace GD} is mostly unaffected by changes in $\kappa_{X_*}$, whereas the convergence for balancing (Algorithm~\ref{algo: balanced GD}) deteriorates with increasing $\kappa_{X_*}$. Quite remarkably, for $\kappa_{X_*}=1$ (not shown) both algorithms have essentially the same convergence behavior. For $\kappa_{X_*}=1.1$ already, Algorithm~\ref{algo: balanced GD} is slightly slower than Algorithm~\ref{algo: Subspace GD}. We can thus conclude that for this problem Algorithm~\ref{algo: balanced GD} is not robust with respect to the smallest singular values of $X_*$. Algorithm~\ref{algo: Subspace GD} however is robust.

\begin{figure}[t!]
    \centering
    \begin{minipage}{.5\textwidth}
        \centering
        \includegraphics[width=\linewidth]{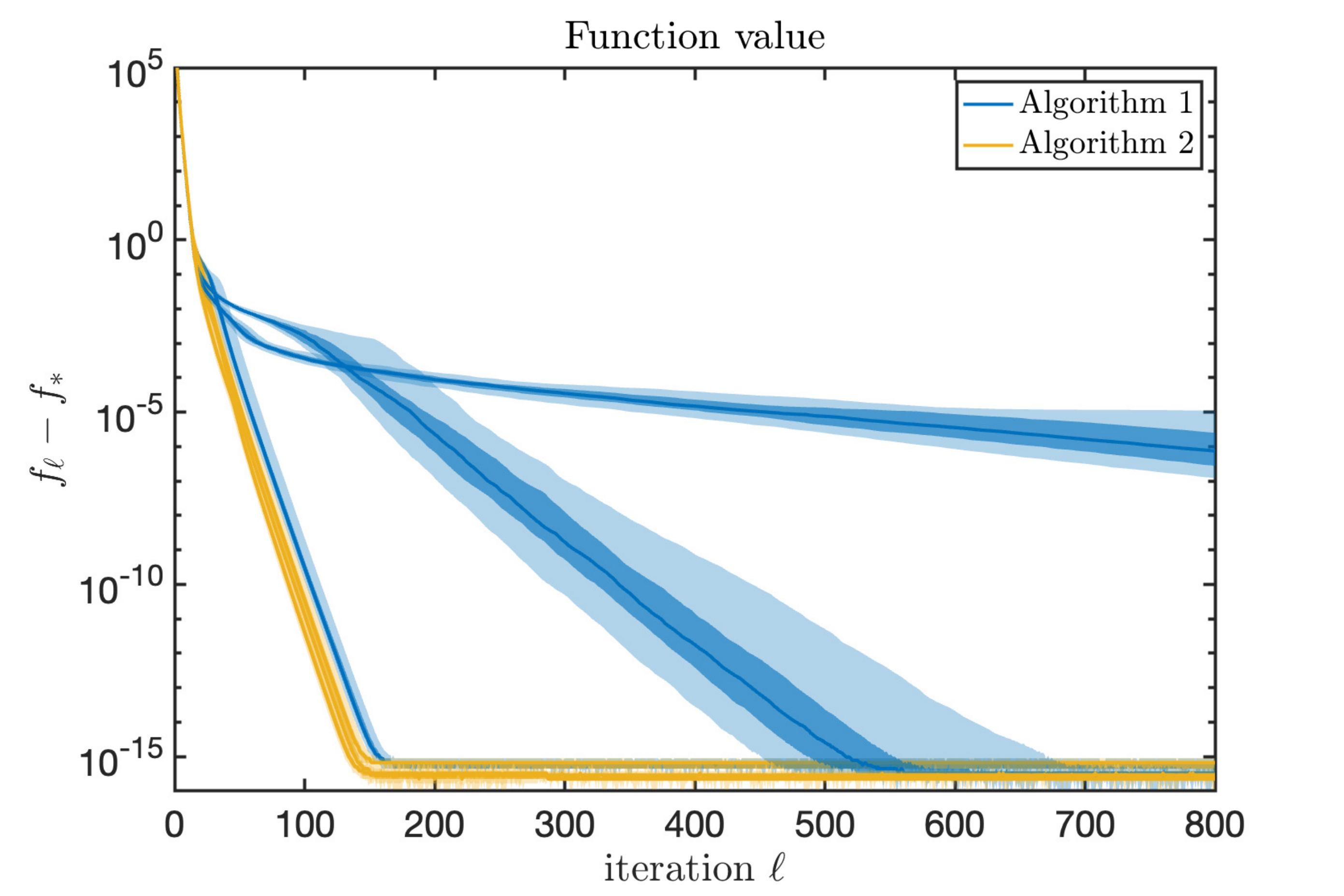}
    \end{minipage}%
    \begin{minipage}{0.5\textwidth}
        \centering
        \includegraphics[width=\linewidth]{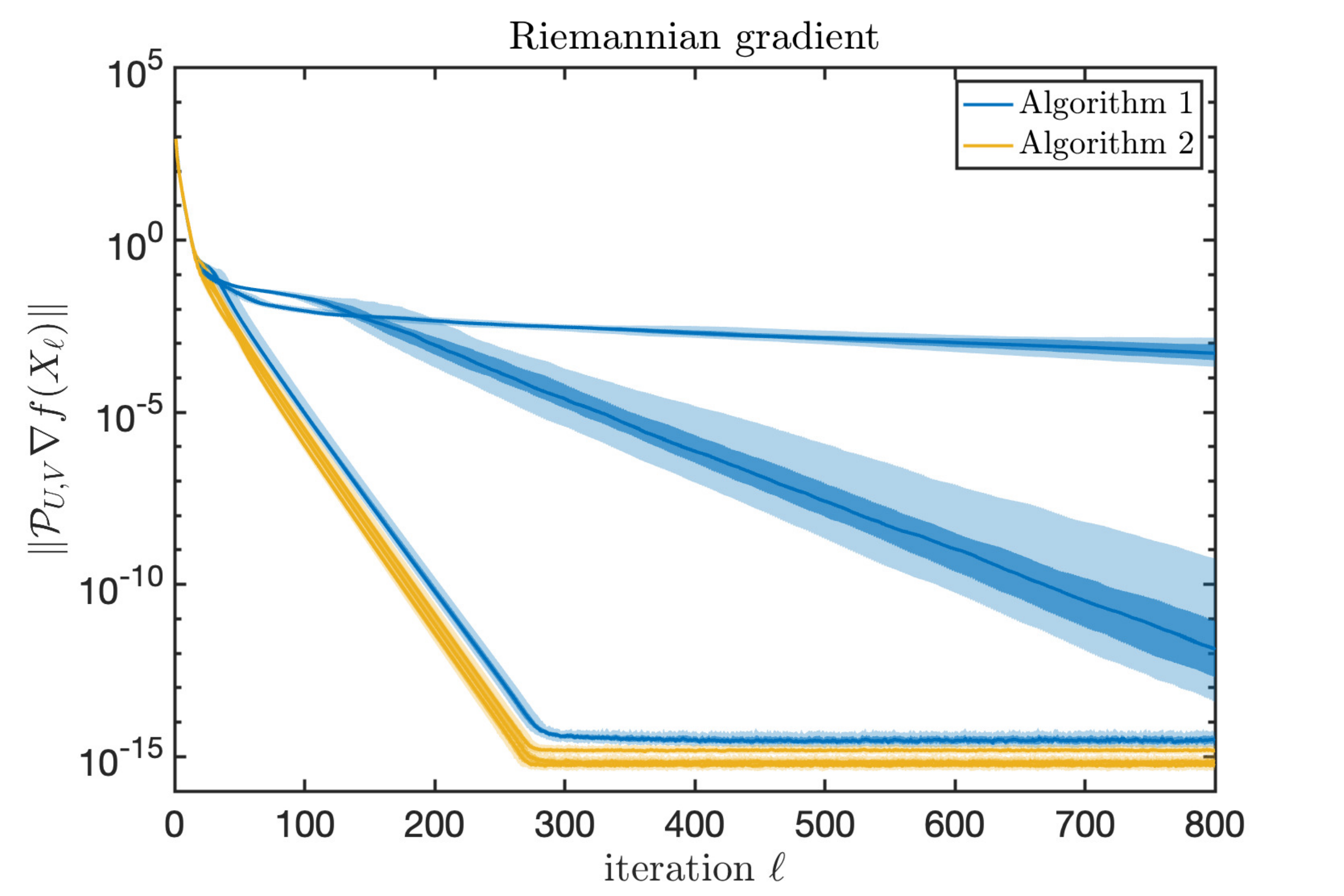}
    \end{minipage}
    \caption{Same setting as Fig.~\ref{fig:influence balancing} but now with several values of $\kappa_{X_*} \in \{ 1.1, 10, 100 \}$. The shaded areas correspond from left to right to increasing values of $\kappa_{X_*}$.}\label{fig:influence condX}
\end{figure}

As related experiment, we now keep $\kappa_{X_*}$ fixed but vary the condition number $\kappa_{\mathcal A}$ of $f$. In this case, it is normal that any algorithm that uses only first-order information will converge slower with higher $\kappa_{\mathcal A}$. Instead of a figure, Table~\ref{tab:iterations changing cond_A} lists the iteration numbers needed to achieve a reduction of the relative error in function value below a fixed tolerance of $\tau \coloneqq 10^{-10}$. In particular, for 100 random realizations of $f$ with the same $\kappa_{X_*}$, we compute the median of
\[ 
\ell_\tau \coloneqq \min \left\{ \ell \in \mathbb{N} \ \colon \ \frac{f(X_\ell) - f(X_*)}{f(X_0) - f(X_*)} < \tau \right\}.
\]

\begin{table}[t!]
\begin{center}
\begin{tabular}{ ccccc } 
 \hline
 & $\kappa_{\mathcal A}$ & 1   &  10  & 1000 \\ \toprule
\multirow{2}{*}{$\kappa_{X_*} = 1 $} & Alg.~\ref{algo: balanced GD}  & $12 \pm 1.5$  &  $25 \pm 4.5$ &  $98 \pm 30$\\
              & Alg.~\ref{algo: Subspace GD}  &   $3 \pm 0$ &   $18 \pm 1$  &  $85 \pm 6.5$ \\
 \midrule
 \multirow{2}{*}{$\kappa_{X_*} = 5 $} & Alg.~\ref{algo: balanced GD}  &  $20 \pm 4.5$  &  $45 \pm 10$ & $142 \pm 23$\\
              & Alg.~\ref{algo: Subspace GD}  &   $3 \pm 0$ &  $18 \pm 1$ &  $81\pm 6$ \\
 \bottomrule
\end{tabular}
\end{center}
\vspace*{-2ex}
\caption{Number of iterations $\ell_\tau$ needed to reduce the relative error in function value to at least $10^{-10}$. The first number shows the median over 100 random realizations and the second number is half the width of the 90 percentile interval.}
\label{tab:iterations changing cond_A}
\end{table}

\vspace*{-1ex}
\paragraph{Influence of line search.}  
Since the experiments from above all used exact line searches, we also tested fixed step sizes and Armijo backtracking. The experimental results are visible in Figure~\ref{fig:influence LS}.

\begin{figure}[t]
    \centering
    \begin{minipage}{.5\textwidth}
        \centering
        \includegraphics[width=\linewidth]{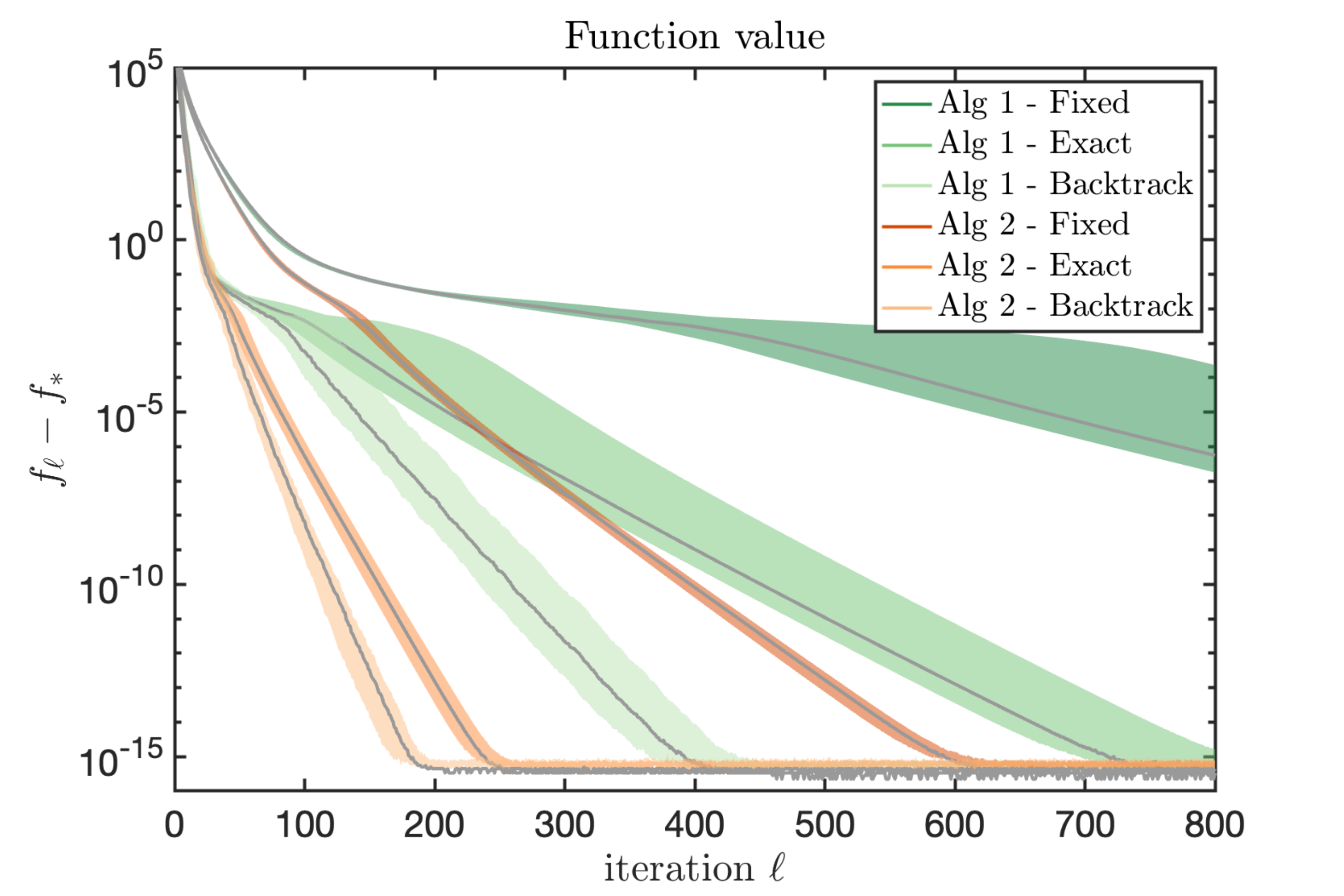}
    \end{minipage}%
    \begin{minipage}{0.5\textwidth}
        \centering
        \includegraphics[width=\linewidth]{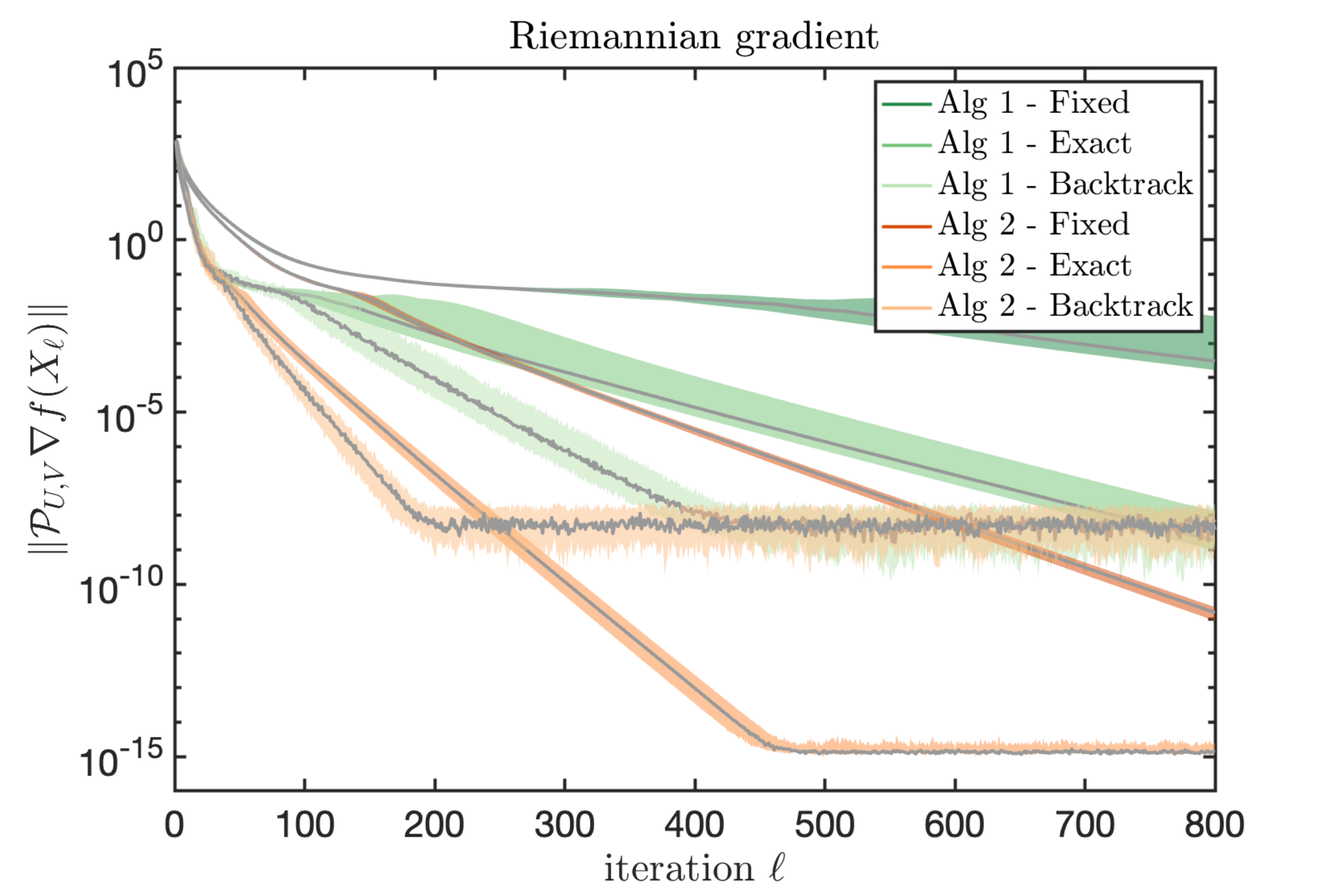}
    \end{minipage}
    \caption{Influence of the line search for a strongly convex quadratic function with fixed condition numbers $\kappa_{\mathcal A} = 40$ and $\kappa_{X_*} = 5$. The colored shaded areas correspond to the 90 percentile data over 20 random realizations. The grey lines indicate the median values.}\label{fig:influence LS}
\end{figure}

For the Armijo step-size rule, we use backtracking with the following parameters\footnote{The experimental results were not very sensitive to the choice of these parameters as long as $\bar{h}$ was sufficiently large.} as defined in Theorems~\ref{thm: balanced} and~\ref{thm: subspace}: initial step size $\bar{h} \coloneqq 10$, step reduction factor $\beta \coloneqq 0.5$, and sufficient-decrease factor $\gamma \coloneqq 10^{-4}$. For the exact step size, we use $\alpha_\ell \coloneqq 0.5$ in Theorems~\ref{thm: balanced} and~\ref{thm: subspace}. In addition, for the radius $\rho$ needed in Theorem~\ref{thm: balanced} we take $\|X_\ell\|_2$. While this is not practical, it gives the best performance for Algorithm~\ref{algo: balanced GD}. 

As expected, the fixed step-size rules lead to the slowest convergence since they are based on the worst-case global Lipschitz constant. Rather surprising, the Armijo step-size rule has the fastest convergence in terms of iterations. However, it needs on average about 10 function evaluations per iteration (but this can likely be improved with more advanced line searches based on interpolation and warm starting). On the other hand, the exact line search is only feasible for some problems like quadratic functions. Moreover, Armijo backtracking cannot decrease the Riemannian gradient much beyond the square root $\sqrt{\varepsilon_{\text{mach}}}$ of the machine precision. This is a well-known problem due to roundoff error when testing the sufficient-decrease condition in finite precision (but it could be mitigated to some extent by the techniques from~\cite{Sutti2021}). Finally, we see that for each choice of line search, Algorithm~\ref{algo: balanced GD} is slower than Algorithm~\ref{algo: Subspace GD}.

In summary, our experiments confirm several indications from the theoretical analysis that the Riemannian version of the Gauss--Southwell selection rule as realized by Algorithm~\ref{algo: Subspace GD} is superior to the factorized version based on balancing (Algorithm~\ref{algo: balanced GD}). This concerns the convergence rates as well as robustness against small singular values.

\paragraph*{Acknowledgements}
We thank P.-A.~Absil for valuable discussion on several parts of this work. We are also grateful to the anonymous referees for their valuable feedback. The work of G.O. was supported by the Fonds de la Recherche Scientifique -- FNRS and the Fonds Wetenschappelijk Onderzoek -- Vlaanderen under EOS Project no 30468160 and by the Fonds de la Recherche Scientifique -- FNRS under Grant no T.0001.23.
The work of A.U.~was supported by the Deutsche Forschungs\-gemeinschaft (DFG, German Research Foundation) – Projektnummer 506561557. The work of B.V.~was supported by the Swiss National Science Foundation (grant number 192129).

\small
\bibliographystyle{plain}
\bibliography{references.bib}

\end{document}